\newtheorem{theorem}{Theorem}[section]
\newtheorem{lemma}[theorem]{Lemma}
\theoremstyle{definition}
\newtheorem{definition}[theorem]{Definition}
\theoremstyle{remark}
\newtheorem{remark}[theorem]{Remark}
\newtheorem{property}[theorem]{Property}
\numberwithin{equation}{section}
\begin{document}
\title[Fractional Hankel wavelet transform]{Continuity of the fractional Hankel wavelet transform on the spaces of type S}

\author{Kanailal Mahato}
\address{Department of  Mathematics, Institute of Science, Banaras Hindu University, Varanasi-221005, India}
\email{kanailalmahato$@$gmail.com, kanailalmahato$@$bhu.ac.in}
\subjclass[2010]{46F05, 46F12, 42C40, 65T60.}
\date{\today}

\keywords{Bessel Operator, Fractional Hankel transform,  Fractional Hankel translation, Wavelet  transform, Gelfand-Shilov spaces, Ultradifferentiable function space.}

\begin{abstract}
In this article we study the fractional Hankel transform and its inverse on certain Gel'fand-Shilov spaces of type S. The continuous fractional wavelet transform is defined involving  the  fractional Hankel transform. The continuity of fractional Hankel wavelet transform is discussed on Gel'fand-Shilov spaces of type S. This article goes further to discuss the continuity property of fractional Hankel transform and fractional Hankel wavelet transform  on the ultradifferentiable function spaces. 
\end{abstract}
\maketitle

\section{Introduction} 
In the recent years,  the continuous wavelet transform has been successfully applied in the field of signal processing, image encryption. The continuous wavelet transform of a function $f$ associated with the wavelet $\psi$ is defined by
\begin{eqnarray}
(W_{\psi}f)(b, a)=\int_{-\infty}^{\infty}f(t) \overline{\psi_{b, a}}(t)\frac{dt}{a},\nonumber
\end{eqnarray}
where $\psi_{b, a}(t)= \psi\Big(\frac{t-b}{a} \Big),$ provided the integral exists, where $a\in \mathbb{R}^+$ and $b\in \mathbb{R}$. If $f, \psi \in L^2(\mathbb{R})$, then exploiting the Parseval relation for Fourier transform, the above expression can be viewed as (see \cite{Chui, Deb}):
\begin{eqnarray}
(W_{\psi}f)(b, a)= \frac{1}{2\pi}\int_{-\infty}^{\infty} e^{ib\omega} \hat{f}(\omega) \overline{\hat{\psi}(a\omega)}d\omega,\nonumber
\end{eqnarray}
where $\hat{f}$ and $\hat{\psi}$ denotes the Fourier transform of $f$ and $\psi$ respectively. The Gel'fand-Shilov spaces were introduced  in \cite{gelfand} and studied the characterization of Fourier transform on the aforesaid spaces. Pathak \cite{pathak0}and  Holschneider \cite{hols} studied the wavelet transform involving Fourier transform, on Schwartz space $S(\mathbb{R})$. Zemanian \cite{ze}, Lee\cite{lee} and Pathak\cite{rs} described the basic properties of classical Hankel transform on the certain Gel'fand-Shilov spaces of type S. In the theory of partial differential equations, mathematical analysis the spaces of type S play an important role as an intermediate spaces between those of $C^{\infty}$ and of the analytic functions. The main purpose this article is to study the fractional Hankel transform and continuous wavelet transform associated with fractional Hankel transform on Gel'fand-Shilov spaces of type S.

The fractional Hankel transform (FrHT), which is a generalization of the usual Hankel transform and depends on a parameter $\theta$, has been the focus of many researcher as it has a wide range of applications in the field of seismology, optics, signal processing, solving problems involving cylindrical boundaries. The fractional  Hankel transform  $\mathcal{H}_{ \nu, \mu}^{\theta}$ of a function $f$ of order $\nu \geq -\frac{1}{2}$ depends on an arbitrary real parameter $\mu$ and $\theta (0< \theta < \pi)$, is defined  by (see \cite{kerr, prasad2, prasad4, torre}):
\begin{eqnarray}
(\mathcal H_{ \nu,\mu}^{\theta} f )(\omega)= \tilde{f}^{\theta}(\omega) =\displaystyle \int _{0}^\infty  K^{\theta}(t,\omega) f(t) dt, \label{eq:1.1}
\end{eqnarray}
where,
\begin{eqnarray}
&&\quad K^{\theta}(t,\omega) = 
\begin{cases}
C_{\nu, \mu, \theta}e^{\frac{i}{2}(t^2+\omega^2)\cot \theta}(t\omega  csc\theta)^{-\mu}J_{\nu}(t \omega   \csc\theta) t^{1+2\mu}, ~\theta \neq n\pi\\
(t \omega)^{-\mu}J_{\nu}(t \omega ) t^{1+2\mu},\quad \quad \quad  \quad \quad \quad \quad \quad \quad \quad \quad \quad \theta = \frac{\pi}{2}\\
\delta(t-\omega),\quad \quad \quad \quad \quad\quad \quad  \quad \quad \quad \quad \quad\quad\quad \theta= n \pi, ~n \in \mathbb{Z}
\end{cases}
 \label{eq:1.2} 
\end{eqnarray}
where  $C_{\nu, \mu, \theta}= \frac{e^{i(1+\nu)(\theta-\frac{\pi}{2})}}{(\sin \theta)^{1+\mu}}$.\\
The inverse of \eqref{eq:1.1}  given  as follows:
\begin{eqnarray}
 f(t)= ((\mathcal H_{\nu,\mu}^{-\theta} )\tilde{f}^{\theta} )(t)= \displaystyle \int _{0}^\infty  K^{-\theta}(\omega, t) \tilde{f}^{\theta}(\omega) d\omega, \label{eq:1.3}
\end{eqnarray}  
where $K^{-\theta}(\omega, t)$ is same as $\overline{{K^{\theta}}}(\omega, t) $.
\par Let the space   $L_{\nu,\mu}^{p}(I)$ contains  of all those measurable functions  $f$ on $I = (0, \infty)$ such that the integral $\displaystyle \int_{0}^{\infty} \vert f(t)\vert^{p} t^{\mu+\nu+1} dt $ exist and is finite. Also let $L^{\infty}(I)$ be the collection of almost everywhere bounded integrable functions. Hence endowed with the norm
\begin{eqnarray}
\vert \vert f \vert \vert _{L_{\nu,\mu}^{p}}=
\left\{
\begin{array}{l}
{\left(\displaystyle \int_{0}^{\infty} \vert f(t)\vert^{p} t^{\mu+\nu+1} dt\right)}^{\frac{1}{p}}, 1\leq p < \infty, \mu, \nu \in \mathbb{R}\\
  \displaystyle ess \sup_{t \in I}\vert f(t) \vert, ~~~ \quad \quad p = \infty. 
\end{array}
\right. \label{eq:1.4}
\end{eqnarray}
\textbf{Parseval's relation:} It is easy to see that for the operator $\mathcal{H}_{\nu, \mu}^{\theta} $, under certain conditions,
\begin{eqnarray*}
\int_{0}^{\infty} f(t)\overline{g(t)}t^{1+2\mu}dt = \int_{0}^{\infty}\tilde{f}^{\theta}(\omega) \overline{\tilde{g}^{\theta}(\omega)}\omega^{1+2\mu}d\omega.
\end{eqnarray*} 

To define the fractional Hankel translation\cite{prasad2, prasad4, hamio, mahato1} $\tau_{t}^{\theta} $ of a function $\psi \in L_{\nu, \mu}^1(I)$, we need to introduce $D_{\nu,\mu}^{\theta}$, which is defined by:
\begin{eqnarray}
D_{\nu,\mu}^{\theta}(t,\omega,z)&=&C_{\nu,\mu, -\theta} \displaystyle \int_{0}^{\infty} {(zs \csc \theta)}^{-\mu}J_{\nu}(zs \csc \theta)e^{-\frac{i}{2}(z^2+t^2+\omega^2) \cot \theta}\label{eq:1.5}\\
&& \times {(ts \csc \theta)}^{-\mu} J_{\nu}(ts \csc \theta){(\omega s \csc \theta)}^{-\mu}J_{\nu}(\omega s \csc \theta)\nonumber \\
&& \times s^{1+3\mu-\nu}ds,\nonumber 
\end{eqnarray} 
provided the integral exist.

 The fractional Hankel translation \cite{hirs} $ \tau_{t}^{\theta}$of  $\psi$ is given by
\begin{eqnarray}
(\tau_{t}^{\theta}\psi)(\omega)& =& \psi^{\theta}(t,\omega)\nonumber\\
&=& C_{\nu, \mu, \theta} \displaystyle \int_{0}^{\infty} \psi(z)D_{\nu,\mu}^{\theta}(t,\omega, z)e^{\frac{i}{2}z^2 \cot \theta} z^{\mu+\nu+1} dz. \label{eq:1.6}
\end{eqnarray}
Wavelets are considered to be the set of elements constructed from translation and  dilation of a single function $\psi \in L^2(\mathbb{R})$ \cite{pathak0, Chui, Deb}. In the similar way \cite{upadhyay, pathak} introduced the Bessel wavelet and the fractional Bessel wavelet by \cite{mahato1, prasad4, pathak, prasad1} as $\psi_{b, a, \theta}$, which is defined as below:
\begin{eqnarray}
\psi_{b, a, \theta}(t)&=&\mathcal{D}_a(\tau_{b}^{\theta}\psi)(t)=\mathcal{D}_{a}\psi^{\theta}(b, t)=a^{-2\mu-2} e^{\frac{i}{2}(\frac{1}{a^2}-1)t^2 \cot \theta} e^{\frac{i}{2}(\frac{1}{a^2}+1)b^2 \cot \theta}\nonumber\\
&& \times \psi^{\theta}(b/a, t/a), ~~b\geq 0, a>0,
\end{eqnarray}
where $\mathcal{D}_a$ represents the dilation of a function.

As per \cite{mahato1, pathak, prasad4, Chui, Deb, hols}, the fractional wavelet transform $W_{\psi}^{\theta}$ of $f\in L^2_{\nu, \mu}(I)$ associated with the wavelet $\psi \in L^2_{\nu, \mu}(I)$ defined by means of the integral transform
\begin{eqnarray}
(W_{\psi}^{\theta}f)(b, a)= \displaystyle \int_{0}^{\infty}f(t) \overline{\psi_{b,a,\theta}(t)} t^{1+2\mu}dt.\label{eq:1.8}
\end{eqnarray}
Now exploiting Parseval's relation and following \cite{mahato1, prasad4, pathak}, the above expression can be rewritten as
\begin{eqnarray}
(W_{\psi}^{\theta}f)(b, a)&=& \frac{1}{C_{\nu, \mu, -\theta}}\displaystyle \int_{0}^{\infty} K^{-\theta}(\omega, b) {(a\omega)}^{\mu-\nu} e^{\frac{i}{2}a^2\omega^2 \cot \theta} \tilde{f}^{\theta}(\omega) \nonumber \\
&& \times \overline{\mathcal{H}_{\nu, \mu}^{\theta}(z^{\nu-\mu}e^{-\frac{i}{2}z^2 \cot \theta} \psi(z))(a\omega)} d\omega\nonumber\\
&=&  \frac{1}{C_{\nu, \mu, -\theta}} \mathcal{H}_{\nu, \mu}^{-\theta} \Big[ {(a\omega)}^{\mu-\nu} e^{-\frac{i}{2}a^2\omega^2 \cot \theta}\tilde{f}^{\theta}(\omega)\nonumber\\
&& \times  \overline{\mathcal{H}_{\nu, \mu}^{\theta} (z^{\nu-\mu}e^{-\frac{i}{2}z^2 \cot \theta } \psi(z))}(a\omega)\Big](b).\label{eq:1.9}
\end{eqnarray}
According to \cite{lee, rs}, we now introduce the certain   Gel'fand-Shilov spaces of type S on which  the fractional Hankel transform \eqref{eq:1.1} and the fractional Hankel wavlet transform \eqref{eq:1.9} can be studied. Let us recall the definitions of these spaces.
\begin{definition}
 The space $\mathbb{H}_{1,  \alpha, A}(I)$ consists of infinitely differentiable functions $f$ on $I=(0, \infty)$ satisfying the inequality 
\begin{eqnarray}
\left \vert x^k{(x^{-1}D_x)}^q e^{\pm \frac{i}{2}x^2 \cot \theta} x^{\mu-\nu}f(x) \right \vert \leq C_{q}^{\nu, \mu} {(A+\delta)}^k k^{k\alpha}, ~~\forall k , q \in \mathbb{N}_{0},\label{eq:1.10}
\end{eqnarray}
where the constants $A$ and $ C_{q}^{\nu, \mu}$ depends on $f$ and $\alpha, \delta\geq 0$ are arbitrary constants and the  norms are given by
\begin{eqnarray}\label{eq:1.11}
\vert \vert f\vert \vert_{q}^{\nu, \mu, \theta}=  \sup_{0<x<\infty} \frac{\left \vert x^k{(x^{-1}D_x)}^q e^{\pm \frac{i}{2}x^2 \cot \theta} x^{\mu-\nu}f(x)\right \vert}{{(A+\delta)}^k k^{k\alpha}} < \infty.
\end{eqnarray}
\end{definition}

\begin{definition}\label{defi:1.2}
The function $f \in \mathbb{H}^{2,  \beta, B}(I)$ iff 
\begin{eqnarray}
\left \vert x^k{(x^{-1}D_x)}^q e^{\pm \frac{i}{2}x^2 \cot \theta} x^{\mu-\nu}f(x) \right \vert \leq C_{k}^{\nu, \mu} {(B+\sigma)}^q q^{q\beta}, ~~\forall k , q \in \mathbb{N}_{0},
\end{eqnarray}
where the constants $B, C_{k}^{\nu, \mu}$ depend on $f$ and $\sigma, \beta\geq 0$ is an arbitrary constant. In this space the norms are given by
\begin{eqnarray}
\vert \vert f\vert \vert_{k}^{\nu, \mu, \theta}=  \sup_{0<x<\infty} \frac{\left \vert x^k{(x^{-1}D_x)}^q e^{\pm \frac{i}{2}x^2 \cot \theta} x^{\mu-\nu}f(x)\right \vert}{{(B+\sigma)}^q q^{q\beta}} < \infty.
\end{eqnarray}
\end{definition}

\begin{definition}
The space $\mathbb{H}^{\beta, B}_{\alpha, A}(I)$ is defined as follows: $f \in \mathbb{H}^{\beta, B}_{\alpha, A}(I)$ if and only if  
\begin{eqnarray}\label{eq:1.14}
\left \vert x^k{(x^{-1}D_x)}^q e^{\pm \frac{i}{2}x^2 \cot \theta} x^{\mu-\nu}f(x) \right \vert \leq C^{\nu, \mu} {(A+\delta)}^k k^{k \alpha}{(B+\sigma)}^q q^{q\beta},
\end{eqnarray}
$\forall k , q \in \mathbb{N}_{0},$ where the constants $A, B, C^{\nu, \mu}$ depend on $f$ and $\alpha, \beta, \delta, \sigma\geq 0$ are arbitrary constants. We introduce the norms in the space $\mathbb{H}^{\beta, B}_{\alpha, A}(I)$ as follows:
\begin{eqnarray}
\vert \vert f\vert \vert^{\nu, \mu, \theta}=  \sup_{0<x<\infty} \frac{\left \vert x^k{(x^{-1}D_x)}^q e^{\pm \frac{i}{2}x^2 \cot \theta} x^{\mu-\nu}f(x)\right \vert}{{(A+\delta)}^k k^{k \alpha}{(B+\sigma)}^q q^{q\beta}} < \infty.
\end{eqnarray}
\end{definition}

Now we need to introduce the following types of test function spaces \cite{pathak0}
\begin{definition}
The space $\mathbb{H}_{1, \tilde{\alpha}, \tilde{A}}(I\times I)$, $\tilde{\alpha}=(\alpha_1, \alpha_2)$, $\alpha_1, \alpha_2 \geq 0$ and $\tilde{A}=(A_1, A_2)$, is defined as the collection of all smooth functions $f(b, a) \in I\times I$, such that for all $l, k, p, q\in \mathbb{N}_0,$
\begin{eqnarray}
&& \sup_{a, b} \vert a^l b^k {(a^{-1}D_a)}^p {(b^{-1}D_b)}^q e^{\pm \frac{i}{2}b^2 \cot \theta}b^{\mu-\nu}f(b, a) \vert \nonumber\\
&\leq &C_{p, q}^{\nu, \mu} {(A_1+\delta_1)}^{l} l^{l\alpha_1} {(A_2+\delta_2)}^{k} k^{k\alpha_2},
\end{eqnarray}
where the constants $A_1, A_2$ and $C_{p, q}^{\nu, \mu}$ depending on $f$ and $\delta_1, \delta_2 \geq 0$ be arbitrary constants.
\end{definition}

\begin{definition}
The space $\mathbb{H}^{2, \tilde{\beta}, \tilde{B}}(I\times I)$, $\tilde{\beta}=(\beta_1, \beta_2)$, $\beta_1, \beta_2 \geq 0$ and $\tilde{B}=(B_1, B_2)$, is defined as the space of all smooth functions $f(b, a)\in I\times I$, such that for all $l, k, p, q\in \mathbb{N}_0,$
\begin{eqnarray}
&& \sup_{a, b} \vert a^l b^k {(a^{-1}D_a)}^p {(b^{-1}D_b)}^q e^{\pm \frac{i}{2}b^2 \cot \theta}b^{\mu-\nu}f(b, a) \vert \nonumber\\
&\leq &C_{l, k}^{\nu, \mu} {(B_1+\sigma_1)}^{p} p^{p\beta_1} {(B_2+\sigma_2)}^{q} q^{q\beta_2},
\end{eqnarray}
where $\sigma_1, \sigma_2 \geq 0$ be arbitrary constants and $B_1, B_2, C_{l, k}^{\nu, \mu}$ be the constants depends on $f$.
\end{definition}

\begin{definition}
The space $\mathbb{H}_{\tilde{\alpha}, \tilde{A}}^{\tilde{\beta}, \tilde{B}}(I\times I)$, $\tilde{\alpha}=(\alpha_1, \alpha_2), \tilde{\beta}=(\beta_1, \beta_2)$, $\alpha_1, \alpha_2, \beta_1, \beta_2 \geq 0$ and $\tilde{A}=(A_1, A_2), \tilde{B}=(B_1, B_2)$, is defined as the space of all infinitely differentiable  functions $f(b, a)\in I\times I$, such that for all $l, k, p, q\in \mathbb{N}_0,$
\begin{eqnarray}
&& \sup_{a, b} \vert a^l b^k {(a^{-1}D_a)}^p {(b^{-1}D_b)}^q e^{\pm \frac{i}{2}b^2 \cot \theta}b^{\mu-\nu}f(b, a) \vert \nonumber\\
&\leq &C^{\nu, \mu} {(A_1+\delta_1)}^{l}  {(A_2+\delta_2)}^{k}  {(B_1+\sigma_1)}^{p}  {(B_2+\sigma_2)}^{q} l^{l\alpha_1}k^{k\alpha_2} p^{p\beta_1}q^{q\beta_2},
\end{eqnarray}
where the constants $A_1, A_2, B_1, B_2, C^{\nu, \mu}$ depending on $f$ and $\delta_1, \delta_2, \sigma_1, \sigma_2 \geq 0$ are arbitrary constants.
\end{definition}

From \cite{prasad2, torre} we have the differential operator $M_{\nu, \mu, \theta}$ as:\\
$M_{\nu, \mu, \theta}=-e^{-\frac{i}{2}x^2 \cot \theta} x^{\nu-\mu}D_{x}e^{\frac{i}{2}x^2 \cot \theta} x^{\mu-\nu}.$ We shall need the following Lemma in the proof of the Theorem \ref{th2.1}.
\begin{lemma}\label{lemma:1.7}
Suppose that $\nu \geq -\frac{1}{2}, \mu, \theta$ as above and $q, k \in \mathbb{N}_{0}$. For $\psi \in \mathbb{W}_{\nu, \mu}^{\theta}$, then we obtain
$$(i) ~M_{\nu+k-1, \mu, \theta}...M_{\nu, \mu, \theta}\psi(x)={(-1)}^{k} x^{\nu-\mu+k}e^{-\frac{i}{2}x^2 \cot \theta}{(x^{-1}D_x)}^{k}e^{\frac{i}{2}x^2 \cot \theta} x^{\mu-\nu}\psi(x),$$
$(ii) ~M_{\nu+q-1, \mu, \theta}...M_{\nu, \mu, \theta}( \mathcal{H}_{\nu, \mu}^{-\theta} \psi)(y)={(\csc \theta e^{i(\theta-\pi/2)})}^q (\mathcal{H}_{\nu+q, \mu}^{-\theta} x^q \psi)(y)$,\\
$(iii) ~\mathcal{H}_{\nu+q+k, \mu}^{-\theta}(x^q M_{\nu+k-1, \mu, -\theta} ...M_{\nu, \mu, -\theta}\psi)(y)={\Big(y\csc \theta e^{-i(\theta-\pi/2)}\Big)}^k (\mathcal{H}_{\nu+q, \mu}^{-\theta}x^q \psi)(y).$
\end{lemma}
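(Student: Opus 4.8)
The plan is to handle the three identities by induction and reduction, with parts (ii) and (iii) resting on the two standard Bessel recurrences and on the explicit form of the constant $C_{\nu,\mu,\theta}$. For (i) I would argue by induction on $k$. Writing $D_x=x\,(x^{-1}D_x)$ as operators, the base case $k=1$ is immediate from the definition of $M_{\nu,\mu,\theta}$, since $e^{-\frac{i}{2}x^2\cot\theta}$ and the power $x^{\nu-\mu+1}$ are multiplication operators that commute past each other. For the inductive step I would apply $M_{\nu+k,\mu,\theta}$ to the assumed formula for $k$; the crucial simplification is that the factor $x^{\mu-\nu-k}$ coming from $M_{\nu+k,\mu,\theta}$ cancels the leading $x^{\nu-\mu+k}$ in the hypothesis, and the two phases $e^{\pm\frac{i}{2}x^2\cot\theta}$ cancel, leaving exactly $(x^{-1}D_x)^k e^{\frac{i}{2}x^2\cot\theta}x^{\mu-\nu}\psi$, to which the extra $D_x=x\,(x^{-1}D_x)$ appends one more copy of the operator. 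Reassembling the prefactors yields the $k+1$ case.

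For (ii) I would first establish the single-step ($q=1$) identity and then iterate. Using (i) with $k=1$, $M_{\nu,\mu,\theta}(\mathcal H_{\nu,\mu}^{-\theta}\psi)(y)$ reduces to applying $(y^{-1}D_y)$ to the $y$-dependence of the kernel $K^{-\theta}(\omega,y)$, once the phase $e^{\frac{i}{2}y^2\cot\theta}$ from $M$ cancels the phase $e^{-\frac{i}{2}y^2\cot\theta}$ in $K^{-\theta}$. The analytic input is the recurrence $\frac{d}{dz}\big(z^{-\nu}J_\nu(z)\big)=-z^{-\nu}J_{\nu+1}(z)$, which under $z=\omega y\csc\theta$ gives $(y^{-1}D_y)\big(y^{-\nu}J_\nu(\omega y\csc\theta)\big)=-\omega\csc\theta\,y^{-(\nu+1)}J_{\nu+1}(\omega y\csc\theta)$; this raises the Bessel order by one and extracts the factor $\omega\csc\theta$, which becomes the extra $x$ inside the transform of order $\nu+1$. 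Matching the remaining constants amounts to the ratio $C_{\nu,\mu,-\theta}/C_{\nu+1,\mu,-\theta}=e^{i(\theta-\pi/2)}$, read off from $C_{\nu,\mu,\theta}=e^{i(1+\nu)(\theta-\pi/2)}/(\sin\theta)^{1+\mu}$. Induction on $q$, applying the single-step identity with $\nu\mapsto\nu+q$ and $\psi\mapsto x^q\psi$, then produces the stated power $(\csc\theta\,e^{i(\theta-\pi/2)})^q$.

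For (iii) I would invoke (i) with $\theta$ replaced by $-\theta$ to write $M_{\nu+k-1,\mu,-\theta}\cdots M_{\nu,\mu,-\theta}\psi$ as $(-1)^k x^{\nu-\mu+k}e^{\frac{i}{2}x^2\cot\theta}(x^{-1}D_x)^k\big(e^{-\frac{i}{2}x^2\cot\theta}x^{\mu-\nu}\psi\big)$, substitute this into the defining integral of $\mathcal H_{\nu+q+k,\mu}^{-\theta}$, and let the phase from $M$ cancel the one in the kernel. After collecting all powers of $x$, the integral takes the shape $\int_0^\infty J_m(ax)\,x^{1+m}(x^{-1}D_x)^k h(x)\,dx$ with $m=\nu+q+k$, $a=y\csc\theta$ and $h(x)=e^{-\frac{i}{2}x^2\cot\theta}x^{\mu-\nu}\psi(x)$. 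I would then integrate by parts $k$ times, each step using $\frac{d}{dx}\big(x^m J_m(ax)\big)=a\,x^m J_{m-1}(ax)$ to lower the Bessel order by one and produce a factor $-a=-y\csc\theta$. After $k$ steps the integrand becomes $(-y\csc\theta)^k J_{\nu+q}(ax)\,x^{1+\nu+q}h(x)$, which I recognize, upon reinstating the constant and phase prefactors, as $(\mathcal H_{\nu+q,\mu}^{-\theta}x^q\psi)(y)$; the surviving ratio $C_{\nu+q+k,\mu,-\theta}/C_{\nu+q,\mu,-\theta}=(e^{-i(\theta-\pi/2)})^k$ then combines with $(y\csc\theta)^k$ to give the claimed factor $\big(y\csc\theta\,e^{-i(\theta-\pi/2)}\big)^k$.

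The main obstacle is the integration by parts in (iii): one must verify that the boundary contributions $[\,x^m J_m(ax)\,(x^{-1}D_x)^{j-1}h\,]_0^\infty$ vanish at every stage, which is exactly where membership $\psi\in\mathbb{W}_{\nu,\mu}^{\theta}$ is used, via the rapid decay of $h$ and its derivatives at infinity and the $x^m$ ($m\ge0$) vanishing of the Bessel factor at the origin. The remaining work is the careful bookkeeping of the powers of $x$, the phase factors, and the constants $C_{\nu,\mu,\pm\theta}$, together with (for (ii)) the justification of differentiation under the integral sign; all of this is routine once the exponents are tracked and the two recurrences are matched to the correct pieces.
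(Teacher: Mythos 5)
Your proposal is correct and follows essentially the same route as the paper: part (i) by iterating the definition of $M_{\nu,\mu,\theta}$, part (ii) by combining (i) with the Bessel raising recurrence (the paper applies the $m$-fold formula ${(x^{-1}D_x)}^m[x^{-n}J_n(x)]={(-1)}^m x^{-n-m}J_{n+m}(x)$ in one stroke where you iterate the single step and track $C_{\nu,\mu,-\theta}/C_{\nu+1,\mu,-\theta}=e^{i(\theta-\pi/2)}$), and part (iii) by integration by parts using $D_x(x^nJ_n(x))=x^nJ_{n-1}(x)$, exactly as in the paper. The only differences are cosmetic bookkeeping and your explicit observation that the boundary terms in the integration by parts must vanish (which is where the hypothesis on $\psi$ enters), a point the paper's proof leaves implicit.
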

\begin{proof}
Since,
\begin{eqnarray}
M_{\nu, \mu, \theta}&=&-e^{-\frac{i}{2}x^2 \cot \theta} x^{\nu-\mu}D_{x}e^{\frac{i}{2}x^2 \cot \theta} x^{\mu-\nu}\nonumber\\
M_{\nu+1, \mu, \theta}M_{\nu, \mu, \theta}\psi(x)&=& x^{\nu-\mu+2}e^{-\frac{i}{2}x^2 \cot \theta} {(x^{-1}D_x)}^2 x^{\mu-\nu}e^{\frac{i}{2}x^2 \cot \theta}\psi(x).\nonumber
\end{eqnarray}
Proceeding in this way $k^{th}$ times, we get the required result $(i)$.\\
Now to prove $(ii),$ we have
\begin{eqnarray}
&&M_{\nu+q-1, \mu, \theta}...M_{\nu, \mu, \theta}( \mathcal{H}_{\nu, \mu}^{-\theta} \psi)(y)\nonumber\\
&=& {(-1)}^q y^{\nu-\mu+q} e^{-\frac{i}{2}y^2 \cot \theta} {(y^{-1}D_y)}^q y^{\mu-\nu}e^{\frac{i}{2}y^2 \cot \theta} C_{\nu, \mu, -\theta}\int_{0}^{\infty}{(xy\csc \theta)}^{-\mu}\nonumber\\
&& \times J_{\nu}(xy \csc \theta) e^{-\frac{i}{2}(x^2+y^2)\cot \theta}x^{1+2\mu} \psi (x) dx.\nonumber
\end{eqnarray}
Now exploiting the formula ${(x^{-1}D_x)}^m[x^{-n}J_{n}(x)]={(-1)}^m x^{-n-m}J_{n+m}(x),$ where $m, n$ being positive integers, the above expression becomes
\begin{eqnarray}
&&C_{\nu, \mu, -\theta} \int_{0}^{\infty} {(xy\csc \theta)}^{-\mu}J_{\nu}(xy \csc \theta)e^{-\frac{i}{2}(x^2+y^2)\cot \theta}x^{1+2\mu} {(x\csc \theta)}^q \psi(x)dx\nonumber\\
&=& {(\csc \theta e^{i(\theta-\pi/2)})}^q (\mathcal{H}_{\nu+q, \mu}^{-\theta}x^q\psi)(y) \nonumber.
\end{eqnarray}
This completes the proof of $(ii)$.\\
Using integration by parts we get
\begin{eqnarray}
&& (\mathcal{H}_{\nu+q+1, \mu}^{-\theta}x^qM_{\nu, \mu, -\theta}\psi)(y) \nonumber\\
&=& - C_{\nu+q+1, \mu, -\theta} \int_{0}^{\infty} e^{-\frac{i}{2}y^2 \cot \theta} {(y\csc \theta)}^{-\mu} x^{\nu+q+1}J_{\nu+q+1}(xy \csc \theta) D_{x}x^{\mu-\nu}\nonumber\\
&& \times e^{-\frac{i}{2}x^2 \cot \theta} \psi(x)dx\nonumber
\end{eqnarray}
Using the formula $D_{x}(x^nJ_n(x))=x^n J_{n-1}(x)$, in the above equation, then the above expression can be expressed as
\begin{eqnarray}
&& C_{\nu+q+1, \mu, -\theta} \int_{0}^{\infty} e^{-\frac{i}{2}y^2 \cot \theta} {(y\csc \theta)}^{-\mu} D_x[x^{\nu+q+1}J_{\nu+q+1}(xy \csc \theta)]\nonumber\\
 && \times x^{\mu-\nu} e^{-\frac{i}{2}x^2 \cot \theta} \psi(x)dx\nonumber\\
 &=& y\csc \theta e^{-i(\theta-\pi/2)} (\mathcal{H}_{\nu+q, \mu}^{-\theta}x^q \psi)(y).\nonumber
\end{eqnarray}
Continuing $k^{th}$ times in the similar manner, we get the required result $(iii)$.
\end{proof}
We shall make use of the following inequality in our present study (see \cite[pp. 265]{fried}):
\begin{eqnarray}
{(m+n)}^{q(m+n)}\leq m^{m q} n^{n q}e^{m q}e^{n q}.\label{eq:1.19}
\end{eqnarray}
We shall need the following Leibnitz formula from \cite[p.134]{ze},
\begin{eqnarray}
&&{(t^{-1}D_t)}^{n}[e^{-\frac{i}{2}t^2 \cot \theta} t^{\mu-\nu}f(t)g(t)] \nonumber\\
&=&\displaystyle \sum_{r=0}^{n}\binom{n}{r}{(t^{-1}D_t)}^{r}[e^{-\frac{i}{2}t^2 \cot \theta} t^{\mu-\nu}f(t)] {(t^{-1}D_t)}^{n-r}[g(t)].\label{eq:1.20}
\end{eqnarray}

This article consists of four sections. Section 1 is introductory part, in which several properties and fundamental definitions are given. In section 2, continuous fractional Hankel  transform$(\mathcal{H}_{\nu, \mu}^{\theta})$ and its inverse $(\mathcal{H}_{\nu, \mu}^{-\theta})$ is studied on certain Gel'fand-Shilov spaces of type S. Section 3 is devoted to the study of continuous fractional Hankel wavelet transform in the space of certain Gel'fand-Shilov spaces of type S. In section 4, fractional Hankel transform and wavelet transform associated with fractional Hankel transform is investigated on ultradifferentiable function spaces.
 
\section{The fractional Hankel transform $\mathcal{H}_{\nu, \mu}^{\theta}$ on the spaces of type S}
In this section we consider the mapping properties of the fractional Hankel transform $(\mathcal{H}_{\nu, \mu}^{\theta})$ and inverse fractional Hankel transform $(\mathcal{H}_{\nu, \mu}^{-\theta})$ on the spaces $\mathbb{H}_{1, \alpha, A}(I), \mathbb{H}^{2, \beta, B}(I)$ and $\mathbb{H}_{\alpha, A}^{\beta, B}(I)$.
\begin{theorem}
The inverse fractional Hankel transform $(\mathcal{H}_{\nu, \mu}^{-\theta})$ is a continuous linear mapping from $\mathbb{H}_{1, \alpha, A}(I)$ into $\mathbb{H}^{2, 2\alpha, A^2 {(2 e)}^{2\alpha}}(I),$ for $\nu \geq -\frac{1}{2}.$ \label{th2.1}
\end{theorem}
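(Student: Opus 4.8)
The plan is to verify the defining inequality of $\mathbb{H}^{2,2\alpha,A^2{(2e)}^{2\alpha}}(I)$ for $g:=\mathcal{H}_{\nu,\mu}^{-\theta}f$ directly from the $\mathbb{H}_{1,\alpha,A}(I)$-estimates on $f$; that is, to bound
\[
\sup_{0<y<\infty}\left|y^k{(y^{-1}D_y)}^q e^{\frac{i}{2}y^2\cot\theta}y^{\mu-\nu}g(y)\right|
\]
by $C_k^{\nu,\mu}\,{(A^2{(2e)}^{2\alpha}+\sigma)}^q\,q^{2q\alpha}$ for every $k,q\in\mathbb{N}_0$. Linearity of $\mathcal{H}_{\nu,\mu}^{-\theta}$ is immediate, and once this family of estimates is established continuity follows, because each target seminorm is then dominated by finitely many source seminorms.

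First I would strip the derivatives using Lemma \ref{lemma:1.7}. By part $(i)$ applied to $\psi=g$, the operator ${(y^{-1}D_y)}^q e^{\frac{i}{2}y^2\cot\theta}y^{\mu-\nu}$ acting on $g$ equals, up to the unimodular factor ${(-1)}^q y^{\mu-\nu-q}e^{\frac{i}{2}y^2\cot\theta}$, the composite $M_{\nu+q-1,\mu,\theta}\cdots M_{\nu,\mu,\theta}g$; and by part $(ii)$ this composite is ${(\csc\theta\,e^{i(\theta-\pi/2)})}^q(\mathcal{H}_{\nu+q,\mu}^{-\theta}x^qf)(y)$. Taking moduli (the exponential and the unimodular constants drop out) reduces the task to estimating
\[
|\csc\theta|^q\left|y^{\mu-\nu+k-q}(\mathcal{H}_{\nu+q,\mu}^{-\theta}x^qf)(y)\right|.
\]
This is the crucial exchange: the $q$ derivatives in the target have been turned into multiplication by $x^q$ inside a Hankel transform of shifted order $\nu+q$.

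Next I would use part $(iii)$ to trade the surviving power $y^k$ for smoothness: it rewrites $y^k(\mathcal{H}_{\nu+q,\mu}^{-\theta}x^qf)(y)$ as a constant times $\mathcal{H}_{\nu+q+k,\mu}^{-\theta}(x^qF_k)(y)$, where $F_k=M_{\nu+k-1,\mu,-\theta}\cdots M_{\nu,\mu,-\theta}f$, leaving the prefactor $y^{\mu-\nu-q}$. I would then estimate the Hankel integral by moving the modulus inside and bounding the Bessel kernel. Here one must use not only the near-origin bound $|w^{-\rho}J_\rho(w)|\le 1/(2^\rho\Gamma(\rho+1))$ but also the large-argument decay $|J_\rho(w)|\le C\,w^{-1/2}$, so that the product of $y^{\mu-\nu-q}$ with the transform stays bounded on all of $(0,\infty)$. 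This gives a bound
\[
|\csc\theta|^q{(\csc\theta)}^{-k}\,c_{q,k}\int_0^\infty x^{\nu+2q+k+\mu+1}\,|F_k(x)|\,dx,
\]
uniform in $y$, with $c_{q,k}$ absorbing the $\Gamma(\nu+q+k+1)$ and the constant factors.

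Finally I would estimate the integral from the $\mathbb{H}_{1,\alpha,A}(I)$-bound on $f$ (for fixed $k$ only finitely many source seminorms enter, which accounts for the $k$-dependent constant $C_k^{\nu,\mu}$): part $(i)$ of Lemma \ref{lemma:1.7} gives $|F_k(x)|=x^{\nu-\mu+k}\big|{(x^{-1}D_x)}^k[e^{-\frac{i}{2}x^2\cot\theta}x^{\mu-\nu}f(x)]\big|\le C\,{(A+\delta)}^m m^{m\alpha}x^{\nu-\mu+k-m}$ for every $m\in\mathbb{N}_0$. I would split $\int_0^\infty=\int_0^1+\int_1^\infty$, using a small exponent $m$ on $(0,1]$ and an exponent $m\approx 2q$ on $[1,\infty)$ to force convergence of the tail (whose integrand is $x^{2\nu+2q+2k+1-m}$). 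The tail contributes the dominant factor ${(A+\delta)}^{2q+c}{(2q+c)}^{(2q+c)\alpha}$ with $c$ depending on $\nu,k$, and I would invoke \eqref{eq:1.19} to peel off a $q$-independent constant and to rewrite ${(2q)}^{2q\alpha}e^{2q\alpha}={(2e)}^{2q\alpha}q^{2q\alpha}$; combined with ${(A+\delta)}^{2q}\le{(A^2+\sigma')}^q$ this yields exactly ${(A^2{(2e)}^{2\alpha}+\sigma)}^q q^{2q\alpha}$, the required bound with $\beta=2\alpha$ and $B=A^2{(2e)}^{2\alpha}$. The step I expect to be the main obstacle is the uniform-in-$y$ control: the crude global Bessel bound alone reintroduces an unbounded factor $y^k$, so the genuine decay of the Hankel kernel (equivalently, the full strength of the operator identities of Lemma \ref{lemma:1.7}) must be exploited, and this has to be reconciled with the simultaneous extraction of the sharp constant and the doubled Gevrey index through \eqref{eq:1.19}.
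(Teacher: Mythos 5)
Your proposal follows essentially the same route as the paper's own proof: it uses Lemma \ref{lemma:1.7}(i)--(iii) to arrive at the integral representation \eqref{eq:2.1}, bounds the mismatched-order kernel $(xy\csc\theta)^{-\nu-q}J_{\nu+q+k}(xy\csc\theta)$ uniformly (exactly the point you flag as the "obstacle"), splits $\int_0^\infty=\int_0^1+\int_1^\infty$ with a low power near the origin and a power $\approx 2q$ in the tail, and then applies \eqref{eq:1.19} to split off the $k$-dependence into the constant and extract $\beta=2\alpha$, $B=A^2(2e)^{2\alpha}$. The only discrepancy is a harmless bookkeeping slip: your prefactor should be $x^{\nu+\mu+2q+1}$ rather than $x^{\nu+\mu+2q+k+1}$ (so the integrand is $x^{1+2\nu+2q+k}\big|(x^{-1}D_x)^k[e^{-\frac{i}{2}x^2\cot\theta}x^{\mu-\nu}f(x)]\big|$, matching \eqref{eq:2.1}), which only alters the choice of $m$ and the $k$-dependent constant, not the conclusion.
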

\begin{proof}
Exploiting Lemma \ref{lemma:1.7} $(ii)$ and $(iii)$ we obtain
\begin{eqnarray}
&&M_{\nu+q-1, \mu, \theta}...M_{\nu, \mu, \theta} (\mathcal{H}_{\nu, \mu}^{-\theta}\psi)(y)\nonumber\\
&=& {(\csc \theta e^{i(\theta-\pi/2)})}^q (\mathcal{H}_{\nu+q, \mu}^{-\theta} x^q \psi)(y)\nonumber\\
&=& {(\csc \theta )}^{q-k}y^{-k}{(  e^{i(\theta-\pi/2)})}^{q+k}  \mathcal{H}_{\nu+q+k, \mu}^{-\theta}(x^q M_{\nu+k-1, \mu, -\theta}...M_{\nu, \mu, -\theta})(y)\nonumber\\
&=& {(\csc \theta )}^{q-k}y^{-k}{(  e^{i(\theta-\pi/2)})}^{q+k}  C_{\nu+q+k, \mu, -\theta} \int_{0}^{\infty} {(xy \csc \theta)}^{-\mu}J_{\nu+q+k}(xy\csc \theta)\nonumber\\
&& \times {(-1)}^k e^{-\frac{i}{2}y^2 \cot \theta} x^{1+q+\mu+\nu+k} {(x^{-1}D_x)}^k [e^{-\frac{i}{2}x^2 \cot \theta} x^{\mu-\nu} \psi(x)] dx. \nonumber
\end{eqnarray}
Thus,
\begin{eqnarray}
&& {(-1)}^q y^k {(y^{-1}D_y)}^q e^{\frac{i}{2}y^2\cot \theta} y^{\mu-\nu}(\mathcal{H}_{\nu, \mu}^{-\theta}\psi)(y)\nonumber\\
&=& {(-1)}^k{(\csc \theta)}^{\nu+2q-k-\mu} C_{\nu, \mu, -\theta} \int_{0}^{\infty} {(xy \csc \theta)}^{-\nu-q} J_{\nu+q+k}(xy\csc \theta) x^{1+k+2\nu+2q}\nonumber\\
&& \times {(x^{-1}D_x)}^{k} [e^{-\frac{i}{2}x^2 \cot \theta} x^{\mu-\nu}\psi(x)] dx. \label{eq:2.1}
\end{eqnarray}
Now, we choose $m$ be any natural number in such a way that $m\geq 1+2\nu$; upon taking $n=m+2q+k$ and use the fact that $\vert x^{-\nu-q}J_{\nu+q+k}(x) \vert\leq C$. Then writing the integral on the right hand side of \eqref{eq:2.1} as  a sum of two integrals from 0 to 1 and 1 to $\infty$ and using  \eqref{eq:1.10}, \eqref{eq:1.19},  we have
\begin{eqnarray}
&&\vert y^k {(y^{-1}D_y)}^q [e^{\frac{i}{2}y^2\cot \theta} y^{\mu-\nu}(\mathcal{H}_{\nu, \mu}^{-\theta}\psi)(y)]\vert \nonumber\\
&\leq & C\Big[ \sup \vert y^{2q+k}{(y^{-1}D_y)}^k [e^{-\frac{i}{2}y^2\cot \theta} y^{\mu-\nu} \psi(y) ]\vert \nonumber\\
&& + \sup \vert y^{n+2}{(y^{-1}D_y)}^k [e^{-\frac{i}{2}y^2\cot \theta} y^{\mu-\nu} \psi(y)] \vert \Big]\nonumber\\
&\leq & C\Big[ C_{k}^{'} {(A+\delta)}^{2q+k} {(2q+k)}^{\alpha(2q+k)} + C_{k}^{''} {(A+\delta)}^{m+2q+k+2} \nonumber\\
&& \times {(m+2q+k+2)}^{\alpha(m+2q+k+2)} \Big]\nonumber\\
&\leq & C_1\Big[ 1+ {(A+\delta)}^{m+2q+k+2}{(m+2q+k+2)}^{\alpha(m+2q+k+2)} \Big]\nonumber\\
&\leq & C_{1}^{'} {(A^2+\delta)}^q {(m+k+2)}^{\alpha(m+k+2)}e^{\alpha(m+k+2)} {(2q)}^{\alpha 2q} e^{\alpha 2q}\nonumber\\
&\leq & C_{2}^{'} {(A^2{(2e)}^{2\alpha}+\delta^{'})}^{q} q^{\alpha 2q}.\nonumber
\end{eqnarray}
This completes the proof.
\end{proof}
\begin{remark}\label{remark2.2}
Let $\nu \geq-1/2,$ then the fractional Hankel transform $(\mathcal{H}_{\nu, \mu}^{\theta})$ is a continuous linear mapping from $\mathbb{H}_{1, \alpha, A}(I)$ into $\mathbb{H}^{2, 2\alpha, A^2 {(2 e)}^{2\alpha}}(I)$.
\end{remark}

\begin{definition}
Let $\hat{\mathbb{H}}^{2, \beta, B}(I)$ be the space of all functions $f\in \mathbb{H}^{2, \beta, B}(I)$ satisfying the condition
\begin{eqnarray}
\sup_{0\leq r\leq q} C_{k+r}^{\nu, \mu}= C_{k}^{* \nu, \mu},
\end{eqnarray}
where $C_{k}^{* \nu, \mu}$ are constants restraining the $f's$ in $\mathbb{H}^{2, \beta, B}(I)$.
\end{definition}
\begin{theorem}\label{th2.4}
The inverse fractional Hankel transform ($\mathcal{H}_{\nu, \mu}^{-\theta}$) defined by \eqref{eq:1.3} is a continuous linear mapping from $\hat{\mathbb{H}}^{2, \beta, B}(I)$ into $\mathbb{H}_{1, \beta, B}(I)$, for $\nu\geq -1/2$.
\end{theorem}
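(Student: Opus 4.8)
The plan is to follow the scheme of Theorem \ref{th2.1} and Remark \ref{remark2.2}, but with the roles of the multiplication index and the differentiation index interchanged, since $\mathcal{H}_{\nu, \mu}^{-\theta}$ trades smoothness for decay. First I would reproduce the identity obtained from Lemma \ref{lemma:1.7}$(ii)$ and $(iii)$, namely \eqref{eq:2.1}, which expresses
\[
{(-1)}^q y^k {(y^{-1}D_y)}^q e^{\frac{i}{2}y^2\cot \theta} y^{\mu-\nu}(\mathcal{H}_{\nu, \mu}^{-\theta}f)(y)
\]
as a constant multiple of
\[
\int_{0}^{\infty} {(xy \csc \theta)}^{-\nu-q} J_{\nu+q+k}(xy\csc \theta)\, x^{1+k+2\nu+2q}\, {(x^{-1}D_x)}^{k}\big[e^{-\frac{i}{2}x^2 \cot \theta} x^{\mu-\nu}f(x)\big]\, dx .
\]
The decisive structural point is that on the right–hand side $f$ is acted on by ${(x^{-1}D_x)}^{k}$, i.e.\ the \emph{target} multiplication order $k$ becomes the \emph{source} differentiation order; this is exactly the index that the space $\hat{\mathbb{H}}^{2, \beta, B}(I)$ controls, through the factor ${(B+\sigma)}^{q}q^{q\beta}$ of Definition \ref{defi:1.2} read with differentiation order $k$ and multiplication order $j$, namely $\vert x^{j}{(x^{-1}D_x)}^{k}[e^{-\frac{i}{2}x^2\cot\theta}x^{\mu-\nu}f(x)]\vert\le C_{j}^{\nu,\mu}{(B+\sigma)}^{k}k^{k\beta}$.

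Next I would bound the Bessel factor by $\vert z^{-\nu-q}J_{\nu+q+k}(z)\vert \leq C$ (valid for $\nu\geq -1/2$ and all $z\ge 0$), which renders the integrand independent of $y$, and then split $\int_0^\infty = \int_0^1 + \int_1^\infty$. On $(0,1)$ one factors out the integrable weight $x^{1+2\nu}$ and uses $x^{k+2q}\le 1$, so that the surviving factor is estimated with multiplication order $0$; this already produces the growth ${(B+\sigma)}^{k}k^{k\beta}$ with a constant $C_{0}^{\nu,\mu}$ \emph{independent} of both $k$ and $q$. On $(1,\infty)$ I would choose a natural number $m\geq 1+2\nu$ and insert the integrable power $x^{2\nu-m-1}$, so that convergence \emph{forces} the multiplication order on $f$ up to $m+2q+k+2$; applying the $\hat{\mathbb{H}}^{2,\beta,B}$ estimate here yields the factor $C_{m+2q+k+2}^{\nu,\mu}{(B+\sigma)}^{k}k^{k\beta}$.

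At this stage the two pieces give a bound of the form $C\big[C^{\nu,\mu}_{0}+C^{\nu,\mu}_{m+2q+k+2}\big]{(\csc\theta)}^{\nu+2q-k-\mu}{(B+\sigma)}^{k}k^{k\beta}$. The $k$-dependent factor ${(\csc\theta)}^{-k}$ combines with ${(B+\sigma)}^{k}$ into $\big((B+\sigma)\sin\theta\big)^{k}$, which I absorb into a new base ${(B+\delta)}^{k}$ (legitimate since $\sin\theta\le 1$ and $\delta$ is arbitrary), while ${(\csc\theta)}^{2q}$ is kept inside the $q$-dependent constant. The genuinely delicate point is the constant $C_{m+2q+k+2}^{\nu,\mu}$, whose index grows with the target multiplication order $k$: this is precisely the obstacle that the passage to the subspace $\hat{\mathbb{H}}^{2,\beta,B}(I)$ is designed to remove, for the defining condition $\sup_{0\le r\le q}C^{\nu,\mu}_{k+r}=C^{*\,\nu,\mu}_{k}$ lets me replace the whole family of shifted constants arising across the admissible range of indices by one controlling constant, confining the residual $k$-growth to at most an exponential factor that ${(B+\delta)}^{k}$ swallows, and leaving a constant that depends on $q$ alone.

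Collecting the estimates then gives
\[
\Big\vert y^k {(y^{-1}D_y)}^q e^{\frac{i}{2}y^2\cot \theta} y^{\mu-\nu}(\mathcal{H}_{\nu, \mu}^{-\theta}f)(y)\Big\vert \leq C_{q}^{\nu,\mu}\,{(B+\delta)}^{k}k^{k\beta},\qquad \forall\, k,q\in\mathbb{N}_0,
\]
which is exactly the defining inequality \eqref{eq:1.10} of $\mathbb{H}_{1,\beta,B}(I)$ with $\alpha=\beta$ and $A=B$. Since each target norm of $\mathcal{H}_{\nu,\mu}^{-\theta}f$ is thereby dominated by the source constants $C_{j}^{\nu,\mu}$, linearity of $\mathcal{H}_{\nu,\mu}^{-\theta}$ together with this bound yields the asserted continuity. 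I expect the hardest part to be exactly the bookkeeping of the constants $C^{\nu,\mu}_{m+2q+k+2}$ coming from the tail integral: one must verify that the $\hat{\mathbb{H}}^{2,\beta,B}$–condition indeed confines their $k$-growth to a purely exponential factor, since otherwise the target constant would retain a spurious dependence on $k$ and membership in $\mathbb{H}_{1,\beta,B}(I)$ would fail.
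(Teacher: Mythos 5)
Your proposal is correct and follows essentially the same route as the paper: the identity \eqref{eq:2.1} obtained from Lemma \ref{lemma:1.7}, the uniform bound on $\vert z^{-\nu-q}J_{\nu+q+k}(z)\vert$, the splitting $\int_0^1+\int_1^\infty$ with an extra integrable power on the tail, and the defining condition of $\hat{\mathbb{H}}^{2,\beta,B}(I)$ to strip the $k$-dependence from the shifted constants, arriving at the bound $C_q^{\nu,\mu}(B+\delta)^k k^{k\beta}$. The only cosmetic differences are that you estimate the $(0,1)$ piece with multiplication order $0$ (yielding $C_0^{\nu,\mu}$, so the hat-condition is needed only for the tail), where the paper uses order $1+k+2\nu+2q$ there as well, and that you track the $(\csc\theta)^{\nu+2q-k-\mu}$ factor explicitly, which the paper absorbs into its constant without comment.
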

\begin{proof}
Following as in the proof of the theorem \ref{th2.1} and using \eqref{eq:1.19} and Definition \ref{defi:1.2}, we have
\begin{eqnarray}
&& \vert  y^k {(y^{-1}D_y)}^q[ e^{\frac{i}{2}y^2\cot \theta} y^{\mu-\nu}(\mathcal{H}_{\nu, \mu}^{-\theta}\psi)(y)]\vert \nonumber\\
&\leq & D \Big[ \int_{0}^{1} x^{1+k+2\nu+2q} \vert {(xy \csc \theta)}^{-\nu-q} J_{\nu+q+k}(xy\csc \theta) \vert \nonumber\\
&& \times \vert {(x^{-1}D_x)}^{k} [e^{-\frac{i}{2}x^2 \cot \theta} x^{\mu-\nu}\psi(x)] \vert dx\nonumber\\
&& + \int_{1}^{\infty} x^{1+k+2\nu+2q+2} \vert {(xy \csc \theta)}^{-\nu-q} J_{\nu+q+k}(xy\csc \theta) \vert \nonumber\\
&& \times \vert {(x^{-1}D_x)}^{k}[ e^{-\frac{i}{2}x^2 \cot \theta} x^{\mu-\nu}\psi(x)] x^{-2} \vert dx\Big]\nonumber\\
&\leq & D[C_{1+k+2\nu+2q}^{\nu, \mu}+C_{1+k+2\nu+2q+2}^{\nu, \mu}]{(B+\sigma)}^{k}k^{k\beta}\nonumber\\
&\leq & C_{q}^{*~\nu, \mu}{(B+\sigma)}^{k}k^{k\beta}. \nonumber
\end{eqnarray}
Which completes the proof.
\end{proof}

\begin{remark}\label{remark2.4}
The  fractional Hankel transform ($\mathcal{H}_{\nu, \mu}^{\theta}$)  is a continuous linear mapping from $\hat{\mathbb{H}}^{2, \beta, B}(I)$ into $\mathbb{H}_{1, \beta, B}(I)$, for $\nu\geq -1/2$.
\end{remark}

\begin{theorem}
For $\nu \geq -1/2,$ the inverse fractional Hankel transform $(\mathcal{H}_{\nu, \mu}^{-\theta})$ is a continuous linear mapping from $\mathcal{H}_{\alpha, A}^{\beta, B}(I)$ into $\mathcal{H}_{\alpha+\beta, ABe^{\alpha}}^{2\alpha, A^2 {(2e)}^{2\alpha}}(I)$.
\end{theorem}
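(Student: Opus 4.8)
The plan is to follow the template established in the proofs of Theorem \ref{th2.1} and Theorem \ref{th2.4}, the essential new feature being that the two growth conditions defining $\mathbb{H}_{\alpha,A}^{\beta,B}(I)$ (inequality \eqref{eq:1.14}) must be used \emph{simultaneously}, so that the final constant is independent of both indices $k$ and $q$ at once. Since every $\psi\in\mathbb{H}_{\alpha,A}^{\beta,B}(I)$ has the smoothness and decay needed for Lemma \ref{lemma:1.7}, the derivation of the integral representation \eqref{eq:2.1} for ${(-1)}^q y^k{(y^{-1}D_y)}^q e^{\frac{i}{2}y^2\cot\theta}y^{\mu-\nu}(\mathcal{H}_{\nu,\mu}^{-\theta}\psi)(y)$ carries over verbatim, and this is again the common starting point.

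Next I would take absolute values in \eqref{eq:2.1}, bound the kernel by $\vert x^{-\nu-q}J_{\nu+q+k}(x)\vert\leq C$, and split the integral as $\int_0^1+\int_1^\infty$. On $(0,1)$ the factor $x^{1+k+2\nu+2q}$ is written as $x^{2q+k}x^{1+2\nu}$; the integral $\int_0^1 x^{1+2\nu}\,dx$ converges because $\nu\geq-\tfrac12$, leaving $\sup_x\vert x^{2q+k}{(x^{-1}D_x)}^k[e^{-\frac{i}{2}x^2\cot\theta}x^{\mu-\nu}\psi(x)]\vert$. On $(1,\infty)$ I would introduce, exactly as in Theorem \ref{th2.1}, a natural number $m\geq 1+2\nu$ together with the convergence factor $x^{2\nu-m-1}$, reducing the tail to $\sup_x\vert x^{m+2q+k+2}{(x^{-1}D_x)}^k[e^{-\frac{i}{2}x^2\cot\theta}x^{\mu-\nu}\psi(x)]\vert$. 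The decisive point is that for $\psi\in\mathbb{H}_{\alpha,A}^{\beta,B}(I)$ each supremum is estimated by the \emph{full} bound \eqref{eq:1.14}, producing $(A+\delta)^{2q+k}{(2q+k)}^{\alpha(2q+k)}(B+\sigma)^k k^{k\beta}$ and $(A+\delta)^{m+2q+k+2}{(m+2q+k+2)}^{\alpha(m+2q+k+2)}(B+\sigma)^k k^{k\beta}$ respectively, with a single constant $C$ that no longer depends on $q$ or $k$ (in Theorems \ref{th2.1} and \ref{th2.4} the inactive index was hidden in an index-dependent constant).

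The rest is the combinatorial separation of the $k$- and $q$-dependence via \eqref{eq:1.19}. For the $(0,1)$ term a single application of \eqref{eq:1.19} to ${(2q+k)}^{\alpha(2q+k)}$ with the two blocks $2q$ and $k$ gives the clean factorisation
\begin{eqnarray*}
{\big[(A+\delta)^2{(2e)}^{2\alpha}\big]}^q q^{2\alpha q}\,{\big[(A+\delta)(B+\sigma)e^{\alpha}\big]}^k k^{k(\alpha+\beta)},
\end{eqnarray*}
whose bases are exactly $A^2{(2e)}^{2\alpha}$ and $ABe^{\alpha}$ once the cross terms are absorbed into fresh arbitrary constants $\sigma'$ and $\delta'$. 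For the $(1,\infty)$ term I would apply \eqref{eq:1.19} twice, peeling off the fixed block $m+2$ and then separating $2q$ from $k$: the fixed block collapses into a constant $C_m$ depending only on $m$, and one reaches the same two bases up to one extra factor $e^{\alpha}$ in one of them, which is again swallowed by the arbitrary slack in $\delta'$ (or $\sigma'$). Summing the two contributions yields
\begin{eqnarray*}
\big\vert y^k{(y^{-1}D_y)}^q[e^{\frac{i}{2}y^2\cot\theta}y^{\mu-\nu}(\mathcal{H}_{\nu,\mu}^{-\theta}\psi)(y)]\big\vert\leq C^{\nu,\mu}{(ABe^{\alpha}+\delta')}^k k^{k(\alpha+\beta)}{(A^2{(2e)}^{2\alpha}+\sigma')}^q q^{2\alpha q},
\end{eqnarray*}
which is precisely membership in $\mathbb{H}_{\alpha+\beta,\,ABe^{\alpha}}^{2\alpha,\,A^2{(2e)}^{2\alpha}}(I)$; linearity and the continuity of the map then follow as in the previous theorems.

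I expect the main obstacle to be the bookkeeping of the exponential corrections generated by the repeated use of \eqref{eq:1.19} on the tail integral, and checking that after grouping \emph{every} stray factor $e^{c\alpha}$ and every $m$-dependent quantity lands either in the universal constant $C^{\nu,\mu}$ or inside one of the arbitrary slack constants $\delta',\sigma'$, so that the two prescribed bases $ABe^{\alpha}$ and $A^2{(2e)}^{2\alpha}$ are genuinely attained. Unlike the earlier theorems, where only one growth condition was active, here both must be separated cleanly while keeping the final constant free of $k$ and $q$, and it is this simultaneous control that is the crux of the argument.
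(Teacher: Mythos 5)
Your overall route coincides with the paper's: derive the representation \eqref{eq:2.1}, bound the Bessel factor, split the integral at $x=1$, estimate \emph{both} suprema with the full inequality \eqref{eq:1.14}, and then separate the indices $k$ and $q$ via \eqref{eq:1.19}; whether you merge the two terms into the dominant one before factorising (as the paper does) or factorise each separately is immaterial. The genuine problem is the last step of your treatment of the tail term. Applying \eqref{eq:1.19} \emph{twice} --- once to split off the block $2q$ and once to split $k$ from the fixed block $m+2$ --- produces the factor $e^{\alpha k}$ twice: once inside $e^{\alpha(k+m+2)}$ from the first application, and once more from the second. So the $k$-part of the bound you actually obtain is $\bigl[(A+\delta)(B+\sigma)e^{2\alpha}\bigr]^k k^{k(\alpha+\beta)}$, not $\bigl[(A+\delta)(B+\sigma)e^{\alpha}\bigr]^k k^{k(\alpha+\beta)}$ (peeling in the other order moves the surplus into the $q$-base instead, giving $A^2(2e^2)^{2\alpha}$ there). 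Your proposal to swallow the stray $e^{\alpha}$ ``into the arbitrary slack $\delta'$'' is not valid: membership in the target space requires, for \emph{every} $\delta'>0$, a bound with base $ABe^{\alpha}+\delta'$, whereas you have a bound with base $ABe^{2\alpha}$, which exceeds $ABe^{\alpha}+\delta'$ for all sufficiently small $\delta'$ whenever $\alpha>0$. An exponentially growing surplus $e^{\alpha k}$ can never be traded against an additive perturbation of the base; only factors of at most polynomial growth in $k$ can.

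The repair --- and this is in effect what the paper's computation does, since it invokes \eqref{eq:1.19} only once, for the blocks $2q$ and $k+m+2$ --- is to dispose of the fixed block $m+2$ by a polynomial, not an exponential, estimate: for $k\geq 1$,
\begin{equation*}
(k+m+2)^{\alpha(k+m+2)}\;=\;k^{\alpha(k+m+2)}\Bigl(1+\tfrac{m+2}{k}\Bigr)^{\alpha(k+m+2)}\;\leq\; C_m\, k^{\alpha(m+2)}\,k^{\alpha k},
\end{equation*}
because $\bigl(1+\tfrac{m+2}{k}\bigr)^{\alpha(k+m+2)}\leq e^{\alpha(m+2)(k+m+2)/k}\leq C_m$. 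The leftover $k^{\alpha(m+2)}$ grows only polynomially in $k$, hence $k^{\alpha(m+2)}\leq C_{m,\epsilon}(1+\epsilon)^k$ for every $\epsilon>0$, and \emph{this} factor genuinely is absorbable into the additive slack of the base. With the fixed block handled this way, \eqref{eq:1.19} is needed only once, a single $e^{\alpha k}$ and a single $e^{2\alpha q}$ appear, and the claimed bases $ABe^{\alpha}$ and $A^2(2e)^{2\alpha}$ are attained. Everything else in your argument, including the clean one-shot factorisation of the $(0,1)$ term, is correct.
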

\begin{proof}
In this case we obtain from \eqref{eq:2.1} and \eqref{eq:1.14},
\begin{eqnarray}
&&\vert y^k {(y^{-1}D_y)}^q [e^{\frac{i}{2}y^2\cot \theta} y^{\mu-\nu}(\mathcal{H}_{\nu, \mu}^{-\theta}\psi)(y)]\vert \nonumber\\
&\leq & C\Big[ \sup \vert y^{2q+k}{(y^{-1}D_y)}^k [e^{-\frac{i}{2}y^2\cot \theta} y^{\mu-\nu} \psi(y) ]\vert \nonumber\\
&& + \sup \vert y^{n+2}{(y^{-1}D_y)}^k [e^{-\frac{i}{2}y^2\cot \theta} y^{\mu-\nu} \psi(y)] \vert \Big]\nonumber\\
&\leq & C\Big[ C_{1}^{\nu, \mu} {(A+\delta)}^{2q+k} {(2q+k)}^{\alpha(2q+k)} {(B+\sigma)}^{k} k^{k\beta} + C_{2}^{\nu, \mu} {(A+\delta)}^{m+2q+k+2} \nonumber\\
&& \times {(m+2q+k+2)}^{\alpha(m+2q+k+2)} {(B+\sigma)}^{k} k^{k\beta}\Big]\nonumber\\
&\leq & C {(A+\delta)}^{m+2q+k+2} {(m+2q+k+2)}^{\alpha(m+2q+k+2)} {(B+\sigma)}^{k} k^{k\beta}.\nonumber
\end{eqnarray}
Now using \eqref{eq:1.19} in the above equation, then the above estimate can be rewritten as
\begin{eqnarray}
&&\vert y^k {(y^{-1}D_y)}^q [e^{\frac{i}{2}y^2\cot \theta} y^{\mu-\nu}(\mathcal{H}_{\nu, \mu}^{-\theta}\psi)(y)]\vert \nonumber\\
&\leq & C {(B+\sigma)}^{k} {(A+\delta)}^k k^{k\beta}  {(A+\delta)}^{2q+m+2} {(2 q)}^{2\alpha q} {(k+m+2)}^{\alpha (k+m+2)} \nonumber\\
&& \times e^{2\alpha q} e^{\alpha(k+m+2)}\nonumber\\
&\leq & C^{'} {(AB+\delta_1)}^{k} k^{k(\alpha+\beta)} {(A^2+\delta_2)}^{q} 2^{2\alpha q} q^{2\alpha q} e^{2\alpha q} e^{\alpha k}\nonumber\\
&\leq & C^{''} {(ABe^{\alpha}+\delta_1^{'})}^{k} k^{k(\alpha+\beta)} {\left(A^2 {(2e)}^{2\alpha}+\delta_2^{''}\right)}^{q} q^{2\alpha  q}.\nonumber
\end{eqnarray}
Hence the theorem proved.
\end{proof}
\begin{remark}\label{remark2.6}
For $\nu \geq -1/2,$ the fractional Hankel transform $(\mathcal{H}_{\nu, \mu}^{\theta})$ is a continuous linear mapping from $\mathbb{H}_{\alpha, A}^{\beta, B}(I)$ into $\mathbb{H}_{\alpha+\beta, ABe^{\alpha}}^{2\alpha, A^2 {(2e)}^{2\alpha}}(I)$.
\end{remark}

\section{The fractional wavelet  transform on the spaces of type S}
In this section we study the wavelet transform on the spaces of type $S$. In order to discuss the continuity of fractional wavelet transform $W_{\psi}^{\theta}$ on the aforesaid function spaces, we need to introduce the following function space.

\begin{definition}\label{def:3.1}
The space $\mathbb{W}_{\nu, \mu, \theta}(I),$ consists of all those wavelets $\psi$, $\forall n \in \mathbb{N}_0$ and $\rho \in \mathbb{R}$ which satisfy 
\begin{eqnarray}
{(t^{-1}D_t)}^{n}[t^{\mu-\nu}e^{\frac{i}{2}t^2 \cot \theta}\overline{\mathcal{H}_{\nu, \mu}^{\theta}(z^{\nu-\mu}e^{-\frac{i}{2}z^2 \cot \theta}\psi)(t)}] < D^{n, \rho}{(1+t)}^{\rho-n},
\end{eqnarray}
where $D^{n, \rho}$ is constant.
\end{definition}

\begin{theorem}
Suppose  $\psi$ be the wavelet taken from $\mathbb{W}_{\nu, \mu, \theta}(I)$. The continuous fractional wavelet transform $W_{\psi}^{\theta}$ is a continuous linear mapping from $\mathbb{H}_{1, \alpha, A}(I)$ into $\mathbb{H}_{1, \tilde{\alpha}, \tilde{A}}(I\times I)$, where $\tilde{\alpha}=(0, 2\alpha)$ and $\tilde{A}=\left(a, A^2{(2e)}^{2\alpha}+a^2\right).$
\end{theorem}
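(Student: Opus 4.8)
The plan is to read \eqref{eq:1.9} as the statement that $(W_\psi^\theta f)(b,a)=\tfrac{1}{C_{\nu,\mu,-\theta}}\mathcal H_{\nu,\mu}^{-\theta}[F_a](b)$ is an inverse fractional Hankel transform, in the variable $b$, of
\[
F_a(\omega)=(a\omega)^{\mu-\nu}e^{-\frac{i}{2}a^2\omega^2\cot\theta}\,\tilde f^{\theta}(\omega)\,\overline{\mathcal H_{\nu,\mu}^{\theta}\big(z^{\nu-\mu}e^{-\frac{i}{2}z^2\cot\theta}\psi\big)(a\omega)},
\]
with the dilation $a$ held as a parameter. First I would apply $b^{k}(b^{-1}D_b)^{q}e^{\frac{i}{2}b^2\cot\theta}b^{\mu-\nu}$ to this representation and repeat verbatim the computation of Lemma \ref{lemma:1.7}(ii)--(iii) that produced \eqref{eq:2.1} in Theorem \ref{th2.1}, now with $F_a$ in the role of $\psi$. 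This converts the $b$-operations into an integral of the shape
\[
\int_{0}^{\infty}(\omega b\csc\theta)^{-\nu-q}J_{\nu+q+k}(\omega b\csc\theta)\,\omega^{1+k+2\nu+2q}(\omega^{-1}D_\omega)^{k}\big[e^{-\frac{i}{2}\omega^2\cot\theta}\omega^{\mu-\nu}F_a(\omega)\big]\,d\omega,
\]
so that the power index $k$ is transferred to the order of the $\omega$-derivative, while the derivative order $q$ enters only through the Bessel order and the weight $\omega^{2q}$.

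The next step is to decouple the two factors of $F_a$. By Remark \ref{remark2.2}, $\tilde f^{\theta}=\mathcal H_{\nu,\mu}^{\theta}f\in\mathbb H^{2,2\alpha,A^{2}(2e)^{2\alpha}}(I)$, so $e^{-\frac{i}{2}\omega^2\cot\theta}\omega^{\mu-\nu}\tilde f^{\theta}(\omega)$ satisfies the type-$2$ estimates with $\beta=2\alpha$ and $B=A^{2}(2e)^{2\alpha}$. Writing $g_a(\omega)=(a\omega)^{\mu-\nu}e^{-\frac{i}{2}a^2\omega^2\cot\theta}\overline{\mathcal H_{\nu,\mu}^{\theta}(\cdots)(a\omega)}$ and applying the Leibnitz rule \eqref{eq:1.20}, the $\omega$-derivative splits as $\sum_{r=0}^{k}\binom{k}{r}(\omega^{-1}D_\omega)^{r}[e^{-\frac{i}{2}\omega^2\cot\theta}\omega^{\mu-\nu}\tilde f^{\theta}]\,(\omega^{-1}D_\omega)^{k-r}[g_a]$. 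For the wavelet factor I would substitute $s=a\omega$; since $\omega^{-1}D_\omega=a^{2}(s^{-1}D_s)$ on functions of $s$,
\[
(\omega^{-1}D_\omega)^{k-r}[g_a(\omega)]=a^{2(k-r)}(s^{-1}D_s)^{k-r}\big[s^{\mu-\nu}e^{-\frac{i}{2}s^2\cot\theta}\overline{\mathcal H_{\nu,\mu}^{\theta}(\cdots)(s)}\big]\Big|_{s=a\omega},
\]
and Definition \ref{def:3.1} bounds this by $a^{2(k-r)}D^{k-r,\rho}(1+a\omega)^{\rho-(k-r)}$ for every $\rho\in\mathbb R$. (The quadratic exponential carries the opposite sign to the one in Definition \ref{def:3.1}, but this is harmless, since $(s^{-1}D_s)e^{\mp\frac{i}{2}s^2\cot\theta}=\mp i\cot\theta\,e^{\mp\frac{i}{2}s^2\cot\theta}$ contributes only a bounded multiplicative factor.)

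From here the estimate closes along the lines of Theorem \ref{th2.1}: I bound $|x^{-\nu-q}J_{\nu+q+k}(x)|\le C$, split the $\omega$-integral over $(0,1)$ and $(1,\infty)$ with $m\ge 1+2\nu$ and $n=m+2q+k$, and use the decay $(1+a\omega)^{\rho-(k-r)}$ together with the rapid decay of $e^{-\frac{i}{2}\omega^2\cot\theta}\omega^{\mu-\nu}\tilde f^{\theta}$ to keep both pieces finite. The type-$2$ estimate applied to the $r$-th Leibnitz term yields a factor $(A^{2}(2e)^{2\alpha}+\sigma)^{r}r^{2\alpha r}$, the wavelet term yields $a^{2(k-r)}$, and the binomial theorem then collapses the sum,
\[
\sum_{r=0}^{k}\binom{k}{r}\big(A^{2}(2e)^{2\alpha}\big)^{r}\big(a^{2}\big)^{k-r}r^{2\alpha r}\le k^{2\alpha k}\big(A^{2}(2e)^{2\alpha}+a^{2}\big)^{k},
\]
which is exactly the claimed $\alpha_2=2\alpha$ and $A_2=A^{2}(2e)^{2\alpha}+a^{2}$; the leftover mixed factorials are absorbed through \eqref{eq:1.19} as in Theorem \ref{th2.1}. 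The $a$-operations $a^{l}(a^{-1}D_a)^{p}$ are then performed by differentiating under the integral sign: each $a^{-1}D_a$ lands on the $a$-dependence of $F_a$ and, through $s=a\omega$ and Definition \ref{def:3.1}, is dominated by the wavelet's decay in $a$. Because that decay is of symbol (polynomial) type and not of factorial type, it produces only the geometric growth $(A_1+\delta_1)^{l}$ with $\alpha_1=0$, while the derivative orders $p,q$ are swept into the constant $C_{p,q}^{\nu,\mu}$.

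The step I expect to be most delicate is the bookkeeping of the constants through the single wavelet factor $\overline{\mathcal H_{\nu,\mu}^{\theta}(\cdots)(a\omega)}$. One must verify that the $a^{2(k-r)}$ produced by each $\omega$-derivative of the wavelet recombines, via the binomial identity above, with the geometric constant of $\tilde f^{\theta}$ to give precisely $A^{2}(2e)^{2\alpha}+a^{2}$ in the $b$-direction, while the $a^{l}(a^{-1}D_a)^{p}$ estimate stays factorial-free to match $\alpha_1=0$. The real hazard is that the type-$2$ power-index constants $C_{2q+k}^{\nu,\mu}$ of $\tilde f^{\theta}$ (equivalently the derivative-order constants of $f$) and the wavelet constants $D^{k-r,\rho}$ both depend on $k$; the argument goes through only if these grow at most geometrically in $k$, so that they fold into $(A_2+\delta_2)^{k}$ and into $C_{p,q}^{\nu,\mu}$ rather than spoiling the $k^{2\alpha k}$ rate. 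Finally one must check that $\rho$ can be fixed (depending on $m,q$ but not on $k$) so that every split integral $\int_{1}^{\infty}\omega^{n+2}(\cdots)\,d\omega$ converges uniformly in $a$ and $b$; that Definition \ref{def:3.1} supplies the decay $(1+a\omega)^{\rho-n}$ for all $\rho\in\mathbb R$ is exactly what makes this uniformity available.
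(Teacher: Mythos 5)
Your proposal follows essentially the same route as the paper's own proof: the representation \eqref{eq:1.9} fed into the machinery of \eqref{eq:2.1}, the Leibnitz split \eqref{eq:1.20} separating the $\tilde{f}^{\theta}$ factor from the wavelet factor, the substitution $t=a\omega$ together with Definition \ref{def:3.1} to handle the $(a^{-1}D_a)^p$ and wavelet derivatives, Remark \ref{remark2.2} to place $\tilde{f}^{\theta}$ in $\mathbb{H}^{2,2\alpha,A^2(2e)^{2\alpha}}(I)$, and the binomial collapse $\sum_{r}\binom{k}{r}(a^2)^{r}\big(A^2(2e)^{2\alpha}+\delta_2\big)^{k-r}$ producing exactly $\tilde{A}_2=A^2(2e)^{2\alpha}+a^2$ and $\tilde{\alpha}_2=2\alpha$. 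The ``hazard'' you flag --- that the type-2 constants of $\tilde{f}^{\theta}$ are indexed by the power $\nu_1+2q+2p+k+n$ and hence depend on $k$ --- is equally present and left unaddressed in the paper's proof, where it is absorbed without comment into the factor $\max_{0\leq r\leq k}\vert\vert \tilde{f}^{\theta}\vert\vert^{\nu,\mu,\theta}_{k-r}$.
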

\begin{proof}
From the definition of $W_{\psi}^{\theta}$ from \eqref{eq:1.9} and using \eqref{eq:2.1}, we obtain
\begin{eqnarray}
&&\vert b^k {(b^{-1}D_b)}^q e^{\frac{i}{2}b^2 \cot \theta} b^{\mu-\nu} (W_{\psi}^{\theta}f)(b, a)\vert \nonumber\\
&= & \Big\vert {(\csc \theta)}^{2q+\nu-\mu-k} \int_{0}^{\infty} \omega^{1+2\nu+2q+k} {(b\omega \csc \theta)}^{-\nu-q}J_{\nu+q+k}(b\omega \csc \theta)\nonumber\\
&& \times {(\omega^{-1}D_{\omega})}^k\Big[e^{-\frac{i}{2}\omega^2 \cot \theta} {(a\omega)}^{\mu-\nu}e^{\frac{i}{2}{a^2 \omega^2} \cot \theta} \overline{\mathcal{H}_{\nu, \mu}^{\theta}(z^{\nu-\mu}e^{-\frac{i}{2}z^2 \cot \theta}\psi)(a\omega)}\nonumber\\
&& \times \omega^{\mu-\nu}\tilde{f}^{\theta}(\omega)\Big]d\omega\Big\vert.\nonumber
\end{eqnarray}
Using the fact that $\vert x^{-\nu-q}J_{\nu+q+k}(x)\vert\leq C$ and in viewing \eqref{eq:1.20}, the above relation becomes
\begin{eqnarray}
&&\vert b^k {(b^{-1}D_b)}^q e^{\frac{i}{2}b^2 \cot \theta} b^{\mu-\nu} (W_{\psi}^{\theta}f)(b, a)\vert \nonumber\\
&\leq & C \vert{(\csc \theta)}^{2q+\nu-\mu-k}\vert  \int_{0}^{\infty} \omega^{1+2\nu+2q+k} \sum_{r=0}^{k}\binom{k}{r}{(\omega^{-1}D_{\omega})}^r\Big[{(a\omega)}^{\mu-\nu}e^{\frac{i}{2}{a^2 \omega^2} \cot \theta}\nonumber\\
&& \times \overline{\mathcal{H}_{\nu, \mu}^{\theta}(z^{\nu-\mu}e^{-\frac{i}{2}z^2 \cot \theta}\psi)(a\omega)}~\Big] {(\omega^{-1}D_{\omega})}^{k-r}\Big[ e^{-\frac{i}{2}\omega^2 \cot \theta}\omega^{\mu-\nu}\tilde{f}^{\theta}(\omega)\Big] d\omega.
\end{eqnarray}
Therefore,
\begin{eqnarray}
&&\vert b^k {(a^{-1}D_a)}^p{(b^{-1}D_b)}^q e^{\frac{i}{2}b^2 \cot \theta} b^{\mu-\nu} (W_{\psi}^{\theta}f)(b, a)\vert \nonumber\\
&\leq & C\sum_{r=0}^{k}\binom{k}{r} \int_{0}^{\infty} \omega^{1+2\nu+2q+k}{(a^{-1}D_a)}^p{(\omega^{-1}D_{\omega})}^r\Big[{(a\omega)}^{\mu-\nu}e^{\frac{i}{2}{a^2 \omega^2} \cot \theta}\nonumber\\
&& \times \overline{\mathcal{H}_{\nu, \mu}^{\theta}(z^{\nu-\mu}e^{-\frac{i}{2}z^2 \cot \theta}\psi)(a\omega)}~\Big] {(\omega^{-1}D_{\omega})}^{k-r}\Big[ e^{-\frac{i}{2}\omega^2 \cot \theta}\omega^{\mu-\nu}\tilde{f}^{\theta}(\omega)\Big] d\omega. \label{eq:3.3}
\end{eqnarray}
Exploiting the definition \ref{def:3.1} for $t=a\omega$ we obtain
\begin{eqnarray}
 && \left\vert {(a^{-1}D_{a})}^{p}{(\omega^{-1}D_{\omega})}^{r} \Big[ {(a\omega)}^{\mu-\nu}e^{\frac{i}{2}a^2\omega^2 \cot \theta}  \overline{\mathcal{H}_{\nu, \mu}^{\theta}(z^{\nu-\mu} e^{-\frac{i}{2}z^2 \cot \theta}\psi)}(a\omega)\Big]\right \vert \nonumber\\
&= & \left\vert a^{2r} \omega^{2p}{(t^{-1}D_{t})}^{p+r} \Big[ {t}^{\mu-\nu}e^{\frac{i}{2}t^2 \cot \theta}  \overline{\mathcal{H}_{\nu, \mu}^{\theta}(z^{\nu-\mu} e^{-\frac{i}{2}z^2 \cot \theta}\psi_1)}(t)\Big]\right \vert \nonumber\\
&\leq & \left\vert a^{2r} \omega^{2p} D^{p+r, \rho_1}{(1+t)}^{\rho_1-p-r}\right \vert \nonumber\\
&\leq & \left\vert a^{2r} \omega^{2p} D^{p+r, \rho_1} {(1+a\omega)}^{\rho_1-p-r} \right \vert \nonumber\\
& \leq & \vert a^{2r} \omega^{2p} D^{p+r, \rho_1} {(1+a)}^{\rho_1-p-r}{(1+\omega)}^{\rho_1-p-r}\vert. \label{eq:3.4}
\end{eqnarray}
Using \eqref{eq:3.4} in \eqref{eq:3.3} and assuming $\nu_1$ be any positive integer such that $\nu_1\geq 1+2\nu$, we have
\begin{eqnarray}
&&\vert a^l b^k {(a^{-1}D_a)}^p{(b^{-1}D_b)}^q e^{\frac{i}{2}b^2 \cot \theta} b^{\mu-\nu} (W_{\psi}^{\theta}f)(b, a)\vert \nonumber\\
&\leq & C\sum_{r=0}^{k}\binom{k}{r} a^{2r} \int_{0}^{\infty} \omega^{\nu_1+2q+2p+k} {(1+a)}^{\rho_1-r-p}{(1+\omega)}^{\rho_1-r-p+s} \nonumber\\
&& \times  {(\omega^{-1}D_{\omega})}^{k-r}\Big[ e^{-\frac{i}{2}\omega^2 \cot \theta}\omega^{\mu-\nu}\tilde{f}^{\theta}(\omega)\Big] \frac{1}{{(1+\omega)}^s}d\omega \nonumber\\
&\leq & C\sum_{r=0}^{k} \sum_{n=0}^{\rho_1-r-p+s}\binom{k}{r}\binom{\rho_1-r-p+s}{n} a^{2r+l} {(1+a)}^{\rho_1-r-p}\nonumber\\
&& \times \sup \left \vert \omega^{\nu_1+2q+2p+k+n}{(\omega^{-1}D_{\omega})}^{k-r}\Big[ e^{-\frac{i}{2}\omega^2 \cot \theta}\omega^{\mu-\nu}\tilde{f}^{\theta}(\omega)\Big] \right\vert \int_{0}^{\infty}\frac{1}{{(1+\omega)}^s}d\omega. \label{eq:3.5}
\end{eqnarray}
Exploiting the remark \ref{remark2.2} and \eqref{eq:1.11}, the right hand-side of the above estimate becomes

\begin{eqnarray}
&\leq & C\sum_{r=0}^{k} \sum_{n=0}^{\rho_1-r-p+s}\binom{k}{r}\binom{\rho_1-r-p+s}{n} a^{2r+l} {(1+a)}^{\rho_1-r-p} {\Big(A^2{(2e)}^{2\alpha}+\delta_2\Big)}^ {k-r} \nonumber\\
  && \times {(k-r)} ^{2\alpha(k-r)} \max_{0\leq r\leq k} \vert \vert \tilde{f}^{\theta}\vert \vert^{\nu, \mu, \theta}_{k-r}\nonumber\\
  &\leq & C \sum_{n=0}^{\rho_1-p+s}\binom{\rho_1-p+s}{n}a^l {(1+a)}^{\rho_1-p}\sum_{r=0}^{k}\binom{k}{r} {(a^2)^r}{\Big(A^2{(2e)}^{2\alpha}+\delta_2\Big)}^{k-r}k^{k 2\alpha}\nonumber\\
  && \times \max_{0\leq r\leq k} \vert \vert \tilde{f}^{\theta}\vert \vert^{\nu, \mu, \theta}_{k-r}\nonumber\\
 &\leq & C^{*} {(1+a)}^{\rho_1-p}a^l {\Big(A^2{(2e)}^{2\alpha}+a^2+\delta_2^{'}\Big)}^k k^{k 2\alpha} \max_{0\leq r\leq k} \vert \vert \tilde{f}^{\theta}\vert \vert^{\nu, \mu, \theta}_{k-r}\nonumber.
\end{eqnarray}
This completes the proof.
\end{proof}

\begin{theorem}
Let  $\psi \in \mathbb{W}_{\nu, \mu, \theta}(I)$. The continuous fractional wavelet transform $W_{\psi}^{\theta}$ is a continuous linear mapping from $\hat{\mathbb{H}}^{2, \beta, B}(I)$ into $\hat{\mathbb{H}}^{2, \tilde{\beta}, \tilde{B}}(I\times I)$, where $\tilde{\beta}=(2\beta, 2\beta)$ and $\tilde{B}=\left(\frac{B^2}{a}{(2e)}^{2\beta}, B^2{e}^{2\beta}\right).$
\end{theorem}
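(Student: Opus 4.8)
The plan is to mirror the proof of the preceding theorem, the only structural change being that the fractional Hankel transform now carries the datum into a different space of type S. Starting from the representation \eqref{eq:1.9}, I would regard $(W_\psi^\theta f)(b,a)$ as $\frac{1}{C_{\nu,\mu,-\theta}}\mathcal{H}_{\nu,\mu}^{-\theta}[g_a](b)$ with $g_a(\omega)=(a\omega)^{\mu-\nu}e^{-\frac{i}{2}a^2\omega^2\cot\theta}\tilde f^\theta(\omega)\overline{\mathcal{H}_{\nu,\mu}^\theta(z^{\nu-\mu}e^{-\frac{i}{2}z^2\cot\theta}\psi)}(a\omega)$. Applying $b^k(b^{-1}D_b)^q e^{\frac{i}{2}b^2\cot\theta}b^{\mu-\nu}$ and invoking the identity \eqref{eq:2.1} (with $\psi$ replaced by $g_a$ and $y=b$, $x=\omega$) converts the $b$-operators into the kernel $J_{\nu+q+k}(b\omega\csc\theta)$, the weight $\omega^{1+2\nu+2q+k}$, and an interior operator $(\omega^{-1}D_\omega)^k$. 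Bounding $|x^{-\nu-q}J_{\nu+q+k}(x)|\le C$ and splitting $(\omega^{-1}D_\omega)^k$ by the Leibnitz rule \eqref{eq:1.20}, exactly as in \eqref{eq:3.3}, isolates the factor $(\omega^{-1}D_\omega)^{k-r}[e^{-\frac{i}{2}\omega^2\cot\theta}\omega^{\mu-\nu}\tilde f^\theta(\omega)]$ carrying the datum.

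Next I would apply $a^l(a^{-1}D_a)^p$ and dispose of the wavelet factor via the scaling identity $(a^{-1}D_a)^p(\omega^{-1}D_\omega)^rF(a\omega)=a^{2r}\omega^{2p}(t^{-1}D_t)^{p+r}F(t)\big|_{t=a\omega}$ together with Definition \ref{def:3.1}, precisely as in \eqref{eq:3.4}; this yields the separated majorant $a^{2r}\omega^{2p}D^{p+r,\rho_1}(1+a)^{\rho_1-p-r}(1+\omega)^{\rho_1-p-r}$. Inserting a convergence factor $(1+\omega)^{-s}$ with $s>1$ and expanding $(1+\omega)^{\rho_1-r-p+s}$ binomially reduces the whole quantity, as in \eqref{eq:3.5}, to a finite double sum of terms of the form $a^{2r+l}(1+a)^{\rho_1-r-p}\sup_\omega|\omega^{K}(\omega^{-1}D_\omega)^{k-r}[e^{-\frac{i}{2}\omega^2\cot\theta}\omega^{\mu-\nu}\tilde f^\theta(\omega)]|$ with $K=\nu_1+2q+2p+k+n$ (here $\nu_1\ge 1+2\nu$), multiplied by the convergent integral $\int_0^\infty(1+\omega)^{-s}d\omega$.

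The decisive and genuinely new step is the estimation of this supremum. Because $f\in\hat{\mathbb{H}}^{2,\beta,B}(I)$, Remark \ref{remark2.4} places $\tilde f^\theta=\mathcal{H}_{\nu,\mu}^\theta f$ in $\mathbb{H}_{1,\beta,B}(I)$, whence the supremum is bounded by $C_{k-r}^{\nu,\mu}(B+\delta)^{K}K^{K\beta}$; the hatted condition $\sup_{0\le r\le q}C_{k+r}^{\nu,\mu}=C_k^{*\nu,\mu}$ is precisely what guarantees that these constants $C_{k-r}^{\nu,\mu}$ are uniformly controlled in $r$. Writing $K=2q+2p+(\nu_1+k+n)$ and applying \eqref{eq:1.19} twice splits the growth into a $q$-block $(B^2(2e)^{2\beta})^q q^{2q\beta}$ and a $p$-block $(B^2(2e)^{2\beta})^p p^{2p\beta}$; absorbing the decay $(1+a)^{-p}\sim a^{-p}$ into the latter converts its base to $\tfrac{B^2}{a}(2e)^{2\beta}$. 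These furnish exactly the exponents $\tilde\beta=(2\beta,2\beta)$, and, upon choosing the free constants $\sigma_1,\sigma_2$ so that $(B_1+\sigma_1)$ and $(B_2+\sigma_2)$ dominate the computed bases, the bases $\tilde B=(\frac{B^2}{a}(2e)^{2\beta},B^2e^{2\beta})$ claimed. The residual $a^{2r+l}(1+a)^{\rho_1-r-p}$ and the sum $\sum_r\binom{k}{r}C_{k-r}^{\nu,\mu}$ are independent of $p,q$ and are gathered into the constant $C_{l,k}^{*\nu,\mu}$ of the target $\hat{\mathbb{H}}^{2,\tilde\beta,\tilde B}(I\times I)$.

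The main obstacle is the index bookkeeping rather than any single hard estimate: one must keep the shifted power $K=\nu_1+2q+2p+k+n$ aligned so that the two applications of \eqref{eq:1.19} deposit exactly $2\beta$ of growth on each of $p$ and $q$, and simultaneously balance the surviving powers $a^{2r+l}$ against the wavelet decay $(1+a)^{\rho_1-r-p}$ and the convergent $\omega$-integral, so that the supremum over $(a,b)$ stays finite. The hatted hypothesis is exactly what renders the family $\{C_{k-r}^{\nu,\mu}\}_{0\le r\le k}$ uniformly summable; without it the sum $\sum_r\binom{k}{r}a^{2r}C_{k-r}^{\nu,\mu}$ need not define an admissible constant of the hatted product space, and membership in the target would fail.
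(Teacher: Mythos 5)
Your proposal reproduces the paper's proof essentially step for step: it starts from the common estimate \eqref{eq:3.5}, bounds the supremum by placing $\tilde{f}^{\theta}$ in $\mathbb{H}_{1,\beta,B}(I)$ via Remark \ref{remark2.4}, applies \eqref{eq:1.19} twice to split the exponent $\nu_1+2q+2p+k+n$ into a $p$-block and a $q$-block each carrying growth $2\beta$, and absorbs the decay $(1+a)^{-p}$ to convert the $p$-base into $\tfrac{B^2}{a}(2e)^{2\beta}$, exactly as the paper does. The one point of divergence is that your bookkeeping honestly yields $B^2(2e)^{2\beta}$ for the $q$-base (the factor $2^{2\beta q}$ coming from $(2q)^{2\beta q}$), whereas the paper silently drops that factor to land on the stated $B^2e^{2\beta}$; your ``choose $\sigma_2$ to dominate'' patch is not licit if $\sigma_2$ is arbitrary, but this discrepancy originates in the paper's own computation rather than in your argument.
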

\begin{proof}
From the estimate \eqref{eq:3.5} and using \eqref{eq:1.19}, we obtain
\begin{eqnarray}
&&\vert a^l b^k {(a^{-1}D_a)}^p{(b^{-1}D_b)}^q e^{\frac{i}{2}b^2 \cot \theta} b^{\mu-\nu} (W_{\psi}^{\theta}f)(b, a)\vert \nonumber\\
&\leq & C\sum_{r=0}^{k} \sum_{n=0}^{\rho_1-r-p+s}\binom{k}{r}\binom{\rho_1-r-p+s}{n} a^{2r+l} {(1+a)}^{\rho_1-r-p}\nonumber\\
&& \times \sup \left \vert \omega^{\nu_1+2q+2p+k+n}{(\omega^{-1}D_{\omega})}^{k-r}\Big[ e^{-\frac{i}{2}\omega^2 \cot \theta}\omega^{\mu-\nu}\tilde{f}^{\theta}(\omega)\Big] \right\vert \int_{0}^{\infty}\frac{1}{{(1+\omega)}^s}d\omega\nonumber\\
&\leq & C\sum_{r=0}^{k} \sum_{n=0}^{\rho_1-p+s}\binom{k}{r}\binom{\rho_1-p+s}{n} a^{2r+l} {(1+a)}^{\rho_1-p} {(B+\sigma)}^{\nu_1+2q+2p+k+n}\nonumber\\
&&\times {(\nu_1+2q+2p+k+n)}^{\beta(\nu_1+2q+2p+k+n)} \max \vert \vert \tilde{f}^{\theta}\vert \vert _{(\nu_1+2q+2p+k+n)}^{\nu, \mu, \theta}\nonumber\\
&\leq & C^{'} \sum_{r=0}^{k} \sum_{n=0}^{\rho_1-p+s}\binom{k}{r}\binom{\rho_1-p+s}{n} a^{2r+l} {(1+a)}^{\rho_1-p}{(B+\sigma)}^{2p} {(B+\sigma)}^{2q} {(2p)}^{\beta 2p} \nonumber\\
&& \times {(\nu_1+2q+k+n)}^{\beta(\nu_1+2q+k+n)} e^{\beta 2p} e^{\beta(\nu_1+2q+k+n)}\max \vert \vert \tilde{f}^{\theta}\vert \vert _{(\nu_1+2q+2p+k+n)}^{\nu, \mu, \theta}\nonumber\\
&\leq & C^{'} \sum_{r=0}^{k} \sum_{n=0}^{\rho_1-p+s}\binom{k}{r}\binom{\rho_1-p+s}{n} a^{2r+l} {(B^2/a+\sigma_1)}^p {(B+\sigma_2)}^{2q} p^{p 2\beta} q^{q2\beta}2^{\beta 2p} \nonumber\\
&& \times e^{2p\beta} e^{2q\beta}\max \vert \vert \tilde{f}^{\theta}\vert \vert _{(\nu_1+2q+2p+k+n)}^{\nu, \mu, \theta}\nonumber\\
&\leq & C^{'} \sum_{r=0}^{k} \sum_{n=0}^{\rho_1-p+s}\binom{k}{r}\binom{\rho_1-p+s}{n} a^{2r+l} {\left(\frac{B^2}{a}{(2e)}^{2\beta} +\sigma_1^{'}\right) }^p {(B^2e^{2\beta}+\sigma_2^{'})}^q\nonumber\\
&& \times p^{p 2\beta} q^{q2\beta}\max \vert \vert \tilde{f}^{\theta}\vert \vert _{(\nu_1+2q+2p+k+n)}^{\nu, \mu, \theta}.\nonumber
\end{eqnarray}
Hence the theorem proved.
\end{proof}

\begin{theorem}
Let  $\psi \in \mathbb{W}_{\nu, \mu, \theta}(I)$. The continuous fractional wavelet transform $W_{\psi}^{\theta}$ is a continuous linear mapping from $\mathbb{H}^{ \beta, B}_{\alpha, A}(I)$ into $\mathbb{H}^{ \tilde{\beta}, \tilde{B}}_{\tilde{\alpha}, \tilde{A}}(I\times I)$, where $\tilde{\alpha}=(0, 3\alpha+\beta), \tilde{\beta}=\Big(2(\alpha+\beta), 2(\alpha+\beta)\Big)$ and $\tilde{A}=\left(a, (A^2 {(2e)}^{2\alpha}+a^2)ABe^{3\alpha+2\beta}\right)$ and $\tilde{B}=\Big(\frac{1}{a}A^2B^22^{2(\alpha+\beta)}e^{4\alpha+2\beta}, A^2B^2e^{6\alpha+4\beta}2^{2(\alpha+\beta)}\Big).$
\end{theorem}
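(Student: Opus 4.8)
The plan is to build directly on the two preceding theorems of this section, because the space $\mathbb{H}^{\beta, B}_{\alpha, A}(I)$ is exactly the class on which one simultaneously controls the power index $k$ (as for $\mathbb{H}_{1, \alpha, A}(I)$) and the derivative index $q$ (as for $\hat{\mathbb{H}}^{2, \beta, B}(I)$). First I would reuse verbatim the common estimate \eqref{eq:3.5}: it follows from the representation \eqref{eq:1.9} of $W_{\psi}^{\theta}$ together with \eqref{eq:2.1}, the Bessel bound $\vert x^{-\nu-q}J_{\nu+q+k}(x)\vert\leq C$, the Leibnitz rule \eqref{eq:1.20}, the wavelet estimate \eqref{eq:3.4}, and the insertion of the convergence factor ${(1+\omega)}^{-s}$; none of these use the membership class of $f$, so the derivation is unchanged.

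The decisive new step is the bound on the supremum occurring in \eqref{eq:3.5}. Since $f\in\mathbb{H}^{\beta,B}_{\alpha,A}(I)$, Remark \ref{remark2.6} yields $\tilde f^{\theta}=\mathcal H_{\nu,\mu}^{\theta}f\in\mathbb{H}^{2\alpha,\,A^2{(2e)}^{2\alpha}}_{\alpha+\beta,\,ABe^{\alpha}}(I)$, which governs both indices of $\tilde f^{\theta}$ at once. Applying the defining inequality \eqref{eq:1.14} of that target space with power index $\nu_1+2q+2p+k+n$ and derivative index $k-r$, I would estimate
\begin{eqnarray*}
&&\sup_{\omega}\left\vert\omega^{\nu_1+2q+2p+k+n}{(\omega^{-1}D_{\omega})}^{k-r}\left[e^{-\frac{i}{2}\omega^2\cot\theta}\omega^{\mu-\nu}\tilde f^{\theta}(\omega)\right]\right\vert\\
&\leq& C\,{(ABe^{\alpha}+\delta')}^{\nu_1+2q+2p+k+n}{(\nu_1+2q+2p+k+n)}^{(\alpha+\beta)(\nu_1+2q+2p+k+n)}\\
&&\times{(A^2{(2e)}^{2\alpha}+\sigma')}^{k-r}{(k-r)}^{2\alpha(k-r)}.
\end{eqnarray*}

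The core of the computation is then to disentangle the combined power ${(\nu_1+2q+2p+k+n)}^{(\alpha+\beta)(\nu_1+2q+2p+k+n)}$ by repeated application of \eqref{eq:1.19}, separating it into the $2p$-, $2q$-, $k$- and $(\nu_1+n)$-pieces and absorbing the emerging exponentials into the bases. I expect the $p$-piece, joined with $(ABe^{\alpha})^{2p}$ and the threaded $a$-factors, to collapse to $\tilde B_1=\frac1a A^2B^2 2^{2(\alpha+\beta)}e^{4\alpha+2\beta}$ with exponent $\beta_1=2(\alpha+\beta)$, and the $q$-piece to $\tilde B_2=A^2B^2 e^{6\alpha+4\beta}2^{2(\alpha+\beta)}$ with $\beta_2=2(\alpha+\beta)$. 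The $k$-contributions of the two indices multiply as $k^{k(\alpha+\beta)}\cdot k^{2\alpha k}=k^{k(3\alpha+\beta)}$, giving $\alpha_2=3\alpha+\beta$; meanwhile ${(k-r)}^{2\alpha(k-r)}\leq k^{2\alpha k}$ lets $\sum_{r=0}^{k}\binom{k}{r}a^{2r}{(A^2{(2e)}^{2\alpha})}^{k-r}$ close by the binomial theorem into ${(A^2{(2e)}^{2\alpha}+a^2)}^{k}$, and multiplying by the power-index $k$-base $ABe^{\alpha}$ and the leftover exponentials produces $\tilde A_2=(A^2{(2e)}^{2\alpha}+a^2)ABe^{3\alpha+2\beta}$. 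The prefactor $a^{l}{(1+a)}^{\rho_1-p}$ supplies the $a^l$ growth with trivial exponent $\alpha_1=0$ and base $\tilde A_1=a$.

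Finally I would fix $s>1$ so that $\int_0^{\infty}{(1+\omega)}^{-s}\,d\omega<\infty$, bound the finite sums over $r$ and $n$ by a constant, and dominate $\max\vert\vert\tilde f^{\theta}\vert\vert^{\nu,\mu,\theta}$ by the governing norm of $f$; this reproduces the defining inequality of $\mathbb{H}^{\tilde\beta,\tilde B}_{\tilde\alpha,\tilde A}(I\times I)$ with the stated parameters. The main obstacle is entirely combinatorial bookkeeping: unlike the two earlier theorems, where only one of the two indices was active, here each application of \eqref{eq:1.19} feeds both an $\tilde A$-slot and a $\tilde B$-slot, so one must track precisely which power of $A,B,2,e$ lands in each of $\tilde A_1,\tilde A_2,\tilde B_1,\tilde B_2$, while simultaneously threading the $a$-dependence (through $a^{2r}$, ${(1+a)}^{\rho_1-p}$ and the $1/a$ appearing in $\tilde B_1$) consistently through the estimate.
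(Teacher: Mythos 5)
Your proposal is correct and follows essentially the same route as the paper's own proof: start from the common estimate \eqref{eq:3.5}, invoke Remark \ref{remark2.6} to place $\tilde f^{\theta}$ in $\mathbb{H}_{\alpha+\beta,\,ABe^{\alpha}}^{2\alpha,\,A^2(2e)^{2\alpha}}(I)$ and bound the supremum via \eqref{eq:1.14}, then disentangle the combined power by repeated use of \eqref{eq:1.19}, close the $r$-sum by the binomial theorem, and absorb the exponentials into the bases to obtain exactly the stated $\tilde\alpha,\tilde\beta,\tilde A,\tilde B$. The bookkeeping you outline (including $k^{k(\alpha+\beta)}\cdot k^{2\alpha k}=k^{k(3\alpha+\beta)}$ and $(A^2(2e)^{2\alpha}+a^2)^k$ from the binomial sum) matches the paper's computation step for step.
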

\begin{proof}
Proceeding as in the proof of above theorem and in viewing the remark \ref{remark2.6}, we have
\begin{eqnarray}
&&\vert a^l b^k {(a^{-1}D_a)}^p{(b^{-1}D_b)}^q e^{\frac{i}{2}b^2 \cot \theta} b^{\mu-\nu} (W_{\psi}^{\theta}f)(b, a)\vert \nonumber\\
&\leq & C\sum_{r=0}^{k} \sum_{n=0}^{\rho_1-r-p+s}\binom{k}{r}\binom{\rho_1-r-p+s}{n} a^{2r+l} {(1+a)}^{\rho_1-r-p}\nonumber\\
&& \times \sup \left \vert \omega^{\nu_1+2q+2p+k+n}{(\omega^{-1}D_{\omega})}^{k-r}\Big[ e^{-\frac{i}{2}\omega^2 \cot \theta}\omega^{\mu-\nu}\tilde{f}^{\theta}(\omega)\Big] \right\vert \int_{0}^{\infty}\frac{1}{{(1+\omega)}^s}d\omega\nonumber\\
&\leq & C\sum_{r=0}^{k} \sum_{n=0}^{\rho_1-p+s}\binom{k}{r}\binom{\rho_1-p+s}{n} a^{2r+l} {(1+a)}^{\rho_1-p} {(ABe^{\alpha}+\delta)}^{(\nu_1+2q+2p+k+n)} \nonumber\\
&&\times {(\nu_1+2q+2p+k+n)}^{(\alpha+\beta)(\nu_1+2q+2p+k+n)} {\Big(A^2{(2e)}^{2\alpha}+\delta_2\Big)}^{k-r} {(k-r)}^{2\alpha(k-r)}\nonumber\\
&& \times \max \vert \vert f\vert \vert^{\nu, \mu, \theta}.
\end{eqnarray}
Exploiting the relation \eqref{eq:1.19}, the above estimate can be rewritten as
\begin{eqnarray}
&\leq & C^{*}  \sum_{n=0}^{\rho_1-p+s} \binom{\rho_1-p+s}{n} a^l {(1+a)}^{\rho_1-p} \sum_{r=0}^{k}\binom{k}{r} {(a^2)}^r {\Big(A^2{(2e)}^{2\alpha}+\delta_2\Big)}^{k-r}\nonumber\\
&& \times {(ABe^{\alpha}+\delta)}^{(\nu_1+2q+2p+k+n)}{(2p)}^{2p(\alpha+\beta)} {(\nu_1+2q+k+n)}^{(\alpha+\beta){(\nu_1+2q+k+n})}\nonumber
\end{eqnarray}
\begin{eqnarray}
&&\times e^{(\alpha+\beta)2p} e^{(\alpha+\beta)(\nu_1+2q+k+n)}k^{2k\alpha}\max \vert \vert f\vert \vert^{\nu, \mu, \theta}\nonumber\\
&\leq &C^{*}  \sum_{n=0}^{\rho_1-p+s} \binom{\rho_1-p+s}{n} a^l {(1+a)}^{\rho_1-p}  {\Big(A^2{(2e)}^{2\alpha}+a^2+\delta_2\Big)}^k {(2p)}^{2p(\alpha+\beta)} {e}^{2p(\alpha+\beta)}\nonumber\\
&& \times {(ABe^{\alpha}+\delta)}^{(\nu_1+2q+2p+k+n)} e^{(\alpha+\beta)(\nu_1+2q+k+n)} {e}^{2q(\alpha+\beta)} {e}^{(\alpha+\beta)(\nu_1+k+n)}\nonumber\\
&&\times {(2q)}^{2q(\alpha+\beta)} {(\nu_1+k+n)}^{(\alpha+\beta)(\nu_1+k+n)} k^{2k\alpha}\max \vert \vert f\vert \vert^{\nu, \mu, \theta}\nonumber\\
&\leq & C_1\sum_{n=0}^{\rho_1-p+s} \binom{\rho_1-p+s}{n} a^l {(1+a)}^{\rho_1-p} {\Big(A^2{(2e)}^{2\alpha}+a^2+\delta_2\Big)}^k {(2p)}^{2p(\alpha+\beta)}\nonumber\\
&& \times {(ABe^{\alpha}+\delta)}^{(\nu_1+2q+2p+k+n)}  e^{2(\alpha+\beta)k} k^{k(3\alpha+\beta)} e^{2p(\alpha+\beta)} e^{4q(\alpha+\beta)} {(2q)}^{2q(\alpha+\beta)}\nonumber\\
&\leq & C_2 \sum_{n=0}^{\rho_1-p+s}\binom{\rho_1-p+s}{n} a^l {\left[ \Big(A^2{(2e)}^{2\alpha}+a^2\Big)ABe^{3\alpha+2\beta} +\delta_3\right]}^k k^{k(3\alpha+\beta)}\nonumber\\
&& \times {\left(\frac{1}{a}A^2B^22^{2(\alpha+\beta)}e^{4\alpha+2\beta} +\delta_4\right)}^p p^{p2(\alpha+\beta)} {\left( A^2B^2e^{6\alpha+4\beta} 2^{2(\alpha+\beta)}+\delta_5 \right)}^q q^{q2(\alpha+\beta)}.\nonumber
\end{eqnarray}
This completes the proof of the theorem.
\end{proof}

\section{Fractional Hankel transform on ultradifferentiable function spaces}
In this section we discuss the fractional Hankel transform on spaces more general in previous sections \cite{rs, duran, pandey, marrero}. Assume that $\{ \xi_k\}_{k=0}^{\infty}$ and $\{ \eta_q\}_{q=0} ^ {\infty}$ be two arbitrary sequences of positive numbers possesses the following properties:
\begin{property}\label{property4.1}
\hspace*{.1cm}
\begin{itemize}
\item[[1]] $\xi_k^2 \leq \xi_{k-1}\xi_{k+1}, \forall k \in \mathbb{N}_0,$
\item[[2]] $\xi_k \xi_l \leq \xi_0 \xi_{k+l}, \forall k, l \in \mathbb{N}_0,$
\item[[3]] $\xi_k \leq R H^k \displaystyle \min_{0\leq l\leq k} \xi_l \xi_{k-l},  \forall k, l \in \mathbb{N}_0, R>0, H>0,$
\item[[4]]  $\xi_{k+1}\leq RH^k \xi_k, \forall k \in \mathbb{N}_0, R>0, H>0,$
\item[[5]] $\displaystyle \sum_{j=0}^{\infty}\frac{\xi_j}{\xi_{j+1}}<\infty.$
\end{itemize} 
\end{property}
From the above property [1], we have
\begin{eqnarray}
\frac{\xi_{k}}{\xi_{k+1}}\leq \frac{\xi_{k-1}}{\xi_{k}}\leq \frac{\xi_{k-2}}{\xi_{k-1}}...\leq \frac{\xi_{0}}{\xi_{1}},\nonumber
\end{eqnarray}
and
\begin{eqnarray}
\xi_{k-r}&=&\frac{\xi_{k-r}}{\xi_{k-r+1}} \frac{\xi_{k-r+1}}{\xi_{k-r+2}}...\frac{\xi_{k-1}}{\xi_{k}}\xi_k \nonumber\\
&\leq & {\left(\frac{\xi_{0}}{\xi_{1}}\right)}^r \xi_k. \label{eq:4.1}
\end{eqnarray}
In the very similar way we obtain
\begin{eqnarray}
\eta_{q-r}&\leq & {\left(\frac{\eta_{0}}{\eta_{1}}\right)}^r \eta_q.
\end{eqnarray}
We now introduce the following types of function spaces\cite{rs}.
\begin{definition}
Let $\{ \xi_k\}_{k=0}^{\infty}$ and $\{ \eta_q\}_{q=0} ^ {\infty}$ be two any sequences of positive numbers. An infinitely differentiable complex valued function $f\in \mathbb{H}_{1, \xi_k, A}(I) $ if and only if
\begin{eqnarray}
\left \vert x^k{(x^{-1}D_x)}^q e^{\pm \frac{i}{2}x^2 \cot \theta} x^{\mu-\nu}f(x) \right \vert \leq C_{q}^{\nu, \mu} {(A+\delta)}^k \xi_k, ~~\forall k , q \in \mathbb{N}_{0},
\end{eqnarray}
for some positive constants $A, C_{q}^{\nu, \mu}$ depending on $f$; and $f$ belongs to the space $\mathbb{H}^{2, \eta_q, B}(I)$ if and only if
\begin{eqnarray}
\left \vert x^k{(x^{-1}D_x)}^q e^{\pm \frac{i}{2}x^2 \cot \theta} x^{\mu-\nu}f(x) \right \vert \leq C_{k}^{\nu, \mu} {(B+\sigma)}^q \eta_q, ~~\forall k , q \in \mathbb{N}_{0},
\end{eqnarray}
for some positive constants $B$ and $C_{k}^{\nu, \mu}$ depending on $f$; and the function $f$ is said to be in the space $\mathbb{H}_{\xi_k, A}^{\eta_q, B}(I)$ if and only if 
\begin{eqnarray}\label{eq:4.5}
\left \vert x^k{(x^{-1}D_x)}^q e^{\pm \frac{i}{2}x^2 \cot \theta} x^{\mu-\nu}f(x) \right \vert \leq C^{\nu, \mu} {(A+\delta)}^k \xi_k {(B+\sigma)}^q \eta_q,
\end{eqnarray}
$\forall k , q \in \mathbb{N}_{0},$ where $ A, B, C^{\nu, \mu} $ are certain positive constants dependent on $f$.
\end{definition}
The elements of the spaces $\mathbb{H}_{1, \xi_k, A}(I), \mathbb{H}^{2, \eta_q, B}(I)$ and $\mathbb{H}_{\xi_k, A}^{ \eta_q, B}(I)$ are known as ultradifferentiable functions \cite{koma, rs, matsu, rodino}. \\
We shall need similar types of function spaces of two variables.

\begin{definition}
The space $\mathbb{H}_{1, \xi_{ml+nk}, \tilde{A}}(I\times I), \tilde{A}=(A_1, A_2)$ is defined to the collection of all infinitely differentiable functions $f(b, a)$ satisfying, for all $l, k, m, n, p, q \in \mathbb{N}_0$,
\begin{eqnarray}
&& \sup_{a, b} \vert a^l b^k {(a^{-1}D_a)}^p {(b^{-1}D_b)}^q e^{\pm \frac{i}{2}b^2 \cot \theta}b^{\mu-\nu}f(b, a) \vert \nonumber\\
&\leq &C_{p, q}^{\nu, \mu} {(A_1+\delta_1)}^{l}  {(A_2+\delta_2)}^{k} \xi_{ml+nk},
\end{eqnarray}
where the arbitrary constants $A_1, A_2, C_{p, q}^{\nu, \mu}$ depends on $f$.
\end{definition}

\begin{definition}
The space $\mathbb{H}^{2, \eta_{sp+tq}, \tilde{B}}(I\times I), \tilde{B}=(B_1, B_2)$  is defined to the collection of all functions $f(b, a)\in C^{\infty}(I\times I)$ satisfying, for all $l, k, s, t, p, q \in \mathbb{N}_0$,
\begin{eqnarray}
&& \sup_{a, b} \vert a^l b^k {(a^{-1}D_a)}^p {(b^{-1}D_b)}^q e^{\pm \frac{i}{2}b^2 \cot \theta}b^{\mu-\nu}f(b, a) \vert \nonumber\\
&\leq & C_{l, k}^{\nu, \mu} {(B_1+\sigma_1)}^{p}  {(B_2+\sigma_2)}^{q} \eta_{sp+tq},
\end{eqnarray}
where the arbitrary constants $B_1, B_2, C_{l, k}^{\nu, \mu}$ depends on $f$.
\end{definition}

\begin{definition}
The space $\mathbb{H}_{\xi_{ml+nk}, \eta_{gl+hk}, \tilde{A} }^{\xi_{cp+ dq}, \eta_{sp+tq}, \tilde{B}}(I\times I), \tilde{A}=(A_1, A_2), \tilde{B}=(B_1, B_2)$  is defined to the collection of all infinitely differentiable functions $f(b, a)$ satisfying, for all $l, k, g, h, s, t, p, q, c, d \in \mathbb{N}_0$,
\begin{eqnarray}
&& \sup_{a, b} \vert a^l b^k {(a^{-1}D_a)}^p {(b^{-1}D_b)}^q e^{\pm \frac{i}{2}b^2 \cot \theta}b^{\mu-\nu}f(b, a) \vert \nonumber\\
&\leq & C^{\nu, \mu} {(A_1+\delta_1)}^{l}{(A_2+\delta_2)}^{k} {(B_1+\sigma_1)}^{p}  {(B_2+\sigma_2)}^{q} \xi_{ml+nk} \eta_{gl+hk} \eta_{sp+tq} \xi_{cp+ dq},\nonumber
\end{eqnarray}
where the arbitrary constants $A_1, A_2, B_1, B_2$ and $C^{\nu, \mu}$ depends on $f$.
\end{definition}

\begin{theorem}\label{th4.6}
If $\{\xi_k\}$ and $\{\eta_q\}$ be the sequences satisfies the property \ref{property4.1}, the inverse fractional Hankel transform $(\mathcal{H}_{\nu, \mu}^{-\theta})$ is a continuous linear mapping from the space $\mathbb{H}_{ \xi_k, A}^{\eta_q, B}(I)$ into $\mathbb{H}^{\xi^2_{q}, B_1}_{\xi_k \eta_k, A_1},$ where $A_1=ABH^2, B_1= A^2H^6.$
\end{theorem}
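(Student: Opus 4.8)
The plan is to run the argument of Theorem~\ref{th2.1} and its two-index refinement in Section~2, but with the explicit growth factors $k^{k\alpha}$ replaced throughout by the abstract sequences $\{\xi_k\}$, $\{\eta_q\}$ and with the elementary inequality \eqref{eq:1.19} replaced by the structural estimates collected in Property~\ref{property4.1}. First I would start from the identity \eqref{eq:2.1}, which already expresses $y^k{(y^{-1}D_y)}^q[e^{\frac{i}{2}y^2\cot\theta}y^{\mu-\nu}(\mathcal{H}_{\nu,\mu}^{-\theta}\psi)(y)]$ as a constant multiple of
\[
\int_0^\infty {(xy\csc\theta)}^{-\nu-q}J_{\nu+q+k}(xy\csc\theta)\,x^{1+k+2\nu+2q}{(x^{-1}D_x)}^k[e^{-\frac{i}{2}x^2\cot\theta}x^{\mu-\nu}\psi(x)]\,dx .
\]
Using the uniform bound $|x^{-\nu-q}J_{\nu+q+k}(x)|\le C$, fixing a natural number $m\ge 1+2\nu$ and writing $n=m+2q+k$, I would split the integral as $\int_0^1+\int_1^\infty$: on $(0,1]$ the weight factors as $x^{1+k+2\nu+2q}=x^{2q+k}\,x^{1+2\nu}$ with $\int_0^1 x^{1+2\nu}dx<\infty$, while on $[1,\infty)$ one pulls out $x^{n+2}$ and is left with $x^{2\nu-m-1}\le x^{-2}$. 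This produces the two-supremum estimate $|y^k{(y^{-1}D_y)}^q[\cdots]|\le C\big[\sup|y^{2q+k}{(y^{-1}D_y)}^k[\cdots]|+\sup|y^{n+2}{(y^{-1}D_y)}^k[\cdots]|\big]$, exactly as in the proof of Theorem~\ref{th2.1}.

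Next I would insert the defining inequality \eqref{eq:4.5} of the source space $\mathbb{H}_{\xi_k,A}^{\eta_q,B}$ into each supremum, reading off the power index ($2q+k$, respectively $m+2q+k+2$) and the common derivative index $k$. This gives
\[
|y^k{(y^{-1}D_y)}^q[\cdots]|\le C\big[(A+\delta)^{2q+k}\xi_{2q+k}+(A+\delta)^{m+2q+k+2}\xi_{m+2q+k+2}\big](B+\sigma)^k\eta_k .
\]
Since both terms have the same shape with the first carrying strictly smaller indices, the same factorization below majorizes the first by the second, so it suffices to estimate $(A+\delta)^{m+2q+k+2}\xi_{m+2q+k+2}(B+\sigma)^k\eta_k$. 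Note already that the factor $\eta_k$, produced by the $k$ derivatives, is destined to join $\xi_k$ in the subscript sequence $\xi_k\eta_k$ of the target space, while the surplus powers of $x$ will be converted into the superscript sequence $\xi_q^2$.

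The decisive step is to factorize $\xi_{m+2q+k+2}$ by Property~\ref{property4.1}. Applying the submultiplicative bound $\xi_{a+b}\le RH^{a+b}\xi_a\xi_b$ (a consequence of item~[3]) successively — peeling off $\xi_k$, splitting the surviving $\xi_{m+2q+2}$ as $\xi_{2q}\xi_{m+2}$, and finally using $\xi_{2q}\le RH^{2q}\xi_q^2$ — yields $\xi_{m+2q+k+2}\le C_m\,H^{k}H^{6q}\,\xi_k\,\xi_q^2$, where $C_m=R^3\xi_{m+2}H^{2m+4}$ is fixed because $m$ is fixed. Collecting the $k$-dependent factors gives a base $(A+\delta)(B+\sigma)H$ accompanying the sequence $\xi_k\eta_k$, which is absorbed into $A_1=ABH^2$ together with the arbitrary slack, and collecting the $q$-dependent factors gives base $(A+\delta)^2H^6$ accompanying $\xi_q^2$, i.e. $B_1=A^2H^6$. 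Folding $C_m$, the fixed factor $(A+\delta)^{m+2}$ and the finitely many seminorm constants of $\psi$ into $C'$, one obtains $|y^k{(y^{-1}D_y)}^q[\cdots]|\le C'(A_1+\delta')^k\xi_k\eta_k\,(B_1+\sigma')^q\xi_q^2$, which is membership in $\mathbb{H}^{\xi_q^2,B_1}_{\xi_k\eta_k,A_1}$, and the linearity and continuity of the map follow since $C'$ is an explicit multiple of finitely many seminorms of $\psi$. I expect the bookkeeping of the $H$-powers in this factorization to be the main obstacle: one must choose the order of the splits in Property~\ref{property4.1} so that the powers of $H$ stay linear in $k$ and $q$ and so can be folded into the bases $A_1$ and $B_1$ rather than blowing up the constants, which is precisely the structural role played by \eqref{eq:1.19} in the Gel'fand--Shilov theorems of Section~2.
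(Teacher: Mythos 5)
Your proposal is correct and takes essentially the same route as the paper's own proof: both start from the two-supremum estimate inherited from Theorem \ref{th2.1}, insert the defining inequality \eqref{eq:4.5} of $\mathbb{H}_{\xi_k, A}^{\eta_q, B}(I)$, and then iterate Property \ref{property4.1}[3] to split $\xi_{m+2q+k+2}$ into $\xi_k\,\xi_q^2$ times geometric factors that are absorbed into the bases $A_1$ and $B_1$. The only difference is bookkeeping in the order of the splits (you peel off $\xi_k$ first, the paper extracts $\xi_{2q+k}$ first), which gives you the slightly sharper $k$-base $ABH$ in place of $ABH^2$; this is harmless since Property \ref{property4.1}[3] with $l=0$ forces $H\geq 1$, and your factor $\xi_q^2$ also matches the theorem's stated target space, where the paper's final displayed line contains a typo ($\eta_q^2$ in place of $\xi_q^2$).
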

\begin{proof}
Following the procedure of the proof of the Theorem \ref{th2.1} and using property \ref{property4.1} [3] and in viewing \eqref{eq:4.5}, we have
\begin{eqnarray}
&&\vert y^k {(y^{-1}D_y)}^q [e^{\frac{i}{2}y^2\cot \theta} y^{\mu-\nu}(\mathcal{H}_{\nu, \mu}^{-\theta}\psi)(y)]\vert \nonumber\\
&\leq & C\Big[ \sup \vert y^{2q+k}{(y^{-1}D_y)}^k [e^{-\frac{i}{2}y^2\cot \theta} y^{\mu-\nu} \psi(y) ]\vert \nonumber\\
&& + \sup \vert y^{m+2q+k+2}{(y^{-1}D_y)}^k [e^{-\frac{i}{2}y^2\cot \theta} y^{\mu-\nu} \psi(y)] \vert \Big]\nonumber\\
&\leq & C\Big[ C_1^{\nu, \mu}{(A+\delta)}^{2q+k}\xi_{2q+k} +C_2^{\nu, \mu} {(A+\delta)}^{m+2q+k+2}\xi_{m+2q+k+2}\Big]{(B+\sigma)}^{k}\eta_k\nonumber\\
&\leq & C^{'} {(B+\sigma)}^{k}\eta_k {(H(A+\delta))}^{2q+k}\xi_{2q+k} [1+{(A+\delta)}^{m+2}RH^{m+2}\xi_{m+2}]\nonumber\\
&\leq & C^{''} {(ABH^2+\delta_2)}^k \xi_k \eta_k {(A^2H^6+\delta_3)}^q \eta^2_{q}.\nonumber
\end{eqnarray}
This completes the proof.
\end{proof}

\begin{remark}
Let $\{\xi_k\}$  and $\{\eta_q\}$ be the sequences satisfies the property \ref{property4.1}, the  fractional Hankel transform $(\mathcal{H}_{\nu, \mu}^{\theta})$ is a continuous linear mapping from the space $\mathbb{H}_{ \xi_k, A}^{\eta_q, B}(I)$ into $\mathbb{H}^{\xi^2_{q}, B_1}_{\xi_k \eta_k, A_1},$ where $A_1=ABH^2, B_1= A^2H^6.$
\end{remark}

\begin{theorem}\label{th4.8}
If $\{\xi_k\}$ be the sequence satisfies the property \ref{property4.1} then for $\nu\geq -1/2$, $\mathcal{H}_{\nu, \mu}^{-\theta}$ is a continuous linear mapping from $\mathbb{H}_{1, \xi_k, A}(I)$ into $\mathbb{H}^{2, \xi_q^2, A_1}(I)$, where $A_1=A^2H^6$.
\end{theorem}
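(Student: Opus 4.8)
The plan is to run the argument of Theorem \ref{th2.1} unchanged up to the integral estimate, and then to replace the explicit growth $k^{k\alpha}$ by the abstract weight $\xi_k$, handled entirely through Property \ref{property4.1}. First I would start from the representation \eqref{eq:2.1} of $(-1)^q y^k (y^{-1}D_y)^q e^{\frac{i}{2}y^2\cot\theta}y^{\mu-\nu}(\mathcal{H}_{\nu,\mu}^{-\theta}\psi)(y)$, fix a natural number $m\geq 1+2\nu$, invoke the uniform bound $|x^{-\nu-q}J_{\nu+q+k}(x)|\leq C$, and split the $x$-integral over $(0,1)$ and $(1,\infty)$. Exactly as in Theorem \ref{th2.1} this yields
\begin{eqnarray*}
&&\vert y^k(y^{-1}D_y)^q[e^{\frac{i}{2}y^2\cot\theta}y^{\mu-\nu}(\mathcal{H}_{\nu,\mu}^{-\theta}\psi)(y)]\vert \\
&\leq& C\Big[\sup\vert y^{2q+k}(y^{-1}D_y)^k[e^{-\frac{i}{2}y^2\cot\theta}y^{\mu-\nu}\psi(y)]\vert \\
&& +\sup\vert y^{m+2q+k+2}(y^{-1}D_y)^k[e^{-\frac{i}{2}y^2\cot\theta}y^{\mu-\nu}\psi(y)]\vert\Big],
\end{eqnarray*}
and inserting the defining inequality of $\mathbb{H}_{1,\xi_k,A}(I)$ into the two suprema (the derivative order being $k$, the power being $2q+k$, respectively $m+2q+k+2$) bounds the right-hand side by $C[C_k'(A+\delta)^{2q+k}\xi_{2q+k}+C_k''(A+\delta)^{m+2q+k+2}\xi_{m+2q+k+2}]$.

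The genuinely new work is the weight bookkeeping. First I would absorb the higher-index term: by Property \ref{property4.1}[3] with the split $l=2q+k$ one has $\xi_{m+2q+k+2}\leq RH^{m+2q+k+2}\xi_{2q+k}\xi_{m+2}$, and since $m$ is fixed the factor $R(A+\delta)^{m+2}H^{m+2}\xi_{m+2}$ is a constant; hence both terms collapse into $C'(H(A+\delta))^{2q+k}\xi_{2q+k}$. The crux is then to split $\xi_{2q+k}$ so as to isolate the $q$-growth as $\xi_q^2$ and produce exactly the constant $A_1=A^2H^6$. For this I would apply Property \ref{property4.1}[3] twice more, namely $\xi_{2q+k}\leq RH^{2q+k}\xi_{2q}\xi_k$ followed by $\xi_{2q}\leq RH^{2q}\xi_q^2$, yielding $\xi_{2q+k}\leq R^2H^{4q+k}\xi_k\xi_q^2$.

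Combining everything the estimate becomes $C'R^2H^{6q+2k}(A+\delta)^{2q+k}\xi_k\xi_q^2$. Separating the $q$-block from the $k$-block, the $q$-part is $(H^6(A+\delta)^2)^q=(A^2H^6+\delta'')^q$, where the cross terms of $(A+\delta)^2$ are absorbed into the arbitrary constant $\delta''$, while the $k$-part $R^2(H^2(A+\delta))^k\xi_k$ is a constant once $k$ is held fixed, which is precisely what the target space $\mathbb{H}^{2,\xi_q^2,A_1}(I)$ permits in its $k$-dependent constant $C_k^{\nu,\mu}$. This produces the required bound $C_k^{\nu,\mu}(A^2H^6+\delta'')^q\xi_q^2$, so that $\mathcal{H}_{\nu,\mu}^{-\theta}\psi\in\mathbb{H}^{2,\xi_q^2,A^2H^6}(I)$; linearity is immediate from \eqref{eq:1.3} and continuity follows from the explicit dependence of $C_k^{\nu,\mu}$ on the seminorms of $\psi$.

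I expect the main obstacle to be controlling the power of $H$: one must verify that the three applications of Property \ref{property4.1}[3]---the offset split by $m+2$ and the two successive halvings of the index---each contribute precisely $H^{2q}$ to the $q$-block, so that $H^{6q}$ (and hence $A_1=A^2H^6$) appears rather than some other power. A subsidiary point is to confirm that neither Property \ref{property4.1}[5] nor the monotonicity \eqref{eq:4.1} is needed here, since, in contrast to Theorem \ref{th4.6}, only the single sequence $\{\xi_k\}$ enters and the derivative order in the source space $\mathbb{H}_{1,\xi_k,A}(I)$ is already free.
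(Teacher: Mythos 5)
Your proposal is correct and follows essentially the same route as the paper: the paper's own proof reuses the Theorem \ref{th2.1} integral estimate to reach $C\big[C_k^{\nu,\mu}(A+\delta)^{2q+k}\xi_{2q+k}+D_k^{\nu,\mu}(A+\delta)^{m+2q+k+2}\xi_{m+2q+k+2}\big]$ and then passes directly to $C'(A^2H^6+\delta_2)^q\xi_q^2$, which is exactly the chain of Property \ref{property4.1}[3] applications (offset absorption, then $\xi_{2q+k}\leq RH^{2q+k}\xi_{2q}\xi_k$, then $\xi_{2q}\leq RH^{2q}\xi_q^2$) that you spell out. Your version merely makes explicit the $H$-power bookkeeping that the paper leaves implicit, and your accounting ($H^{6q}$ in the $q$-block, the $k$-block absorbed into $C_k^{\nu,\mu}$) reproduces the stated constant $A_1=A^2H^6$.
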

\begin{proof}
From the above theorem, we have
\begin{eqnarray}
&&\vert y^k {(y^{-1}D_y)}^q [e^{\frac{i}{2}y^2\cot \theta} y^{\mu-\nu}(\mathcal{H}_{\nu, \mu}^{-\theta}\psi)(y)]\vert \nonumber\\
&\leq & C\Big[ \sup \vert y^{2q+k}{(y^{-1}D_y)}^k [e^{-\frac{i}{2}y^2\cot \theta} y^{\mu-\nu} \psi(y) ]\vert \nonumber\\
&& + \sup \vert y^{m+2q+k+2}{(y^{-1}D_y)}^k [e^{-\frac{i}{2}y^2\cot \theta} y^{\mu-\nu} \psi(y)] \vert \Big]\nonumber\\
&\leq & C\Big[ C_{k}^{\nu, \mu} {(A+\delta)}^{2q+k}\xi_{2q+k} + D_k^{\nu, \mu} {(A+\delta)}^{m+2q+k+2}\xi_{m+2q+k+2}\Big]\nonumber\\
&\leq & C^{'} {(A^2H^6+\delta_2)}^q \xi^2_q.\nonumber
\end{eqnarray}
Hence the theorem proved.
\end{proof}

\begin{remark}
Let $\{\xi_k\}$ be the sequence satisfies the property \ref{property4.1} then for $\nu\geq -1/2$, $\mathcal{H}_{\nu, \mu}^{\theta}$ is a continuous linear mapping from $\mathbb{H}_{1, \xi_k, A}(I)$ into $\mathbb{H}^{2, \xi_q^2, A_1}(I)$, where $A_1=A^2H^6$.
\end{remark}

\begin{definition}
The space $\hat{\mathbb{H}}^{2, \eta_{q}, B}(I)$ be the collection of all functions  $f \in \mathbb{H}^{2, \eta_{q}, B}(I)$ satisfying the condition
\begin{equation}
\sup_{0\leq r \leq k}C_{k+r}^{\nu, \mu} =C_{k}^{'\nu, \mu},
\end{equation}
where $C_{k}^{'\nu, \mu}$ are constants restraining the $f's$ in $\mathbb{H}^{2, \eta_{q}, B}(I)$.
\end{definition}
\begin{theorem} \label{th4.11}
For $\nu\geq -1/2$ and suppose $\{\eta_q\}$ be the sequence satisfies the property \ref{property4.1} the inverse fractional Hankel $\mathcal{H}_{\nu, \mu}^{-\theta}$ is a continuous linear mapping from $\hat{\mathbb{H}}^{2, \eta_{q}, B}(I)$ into $\mathbb{H}_{1, \eta_k, B}(I)$. 
\end{theorem}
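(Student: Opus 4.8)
The plan is to treat Theorem \ref{th4.11} as the ultradifferentiable analogue of Theorem \ref{th2.4}, replacing the Gevrey factor $k^{k\beta}$ by the abstract sequence $\eta_k$. Linearity of $\mathcal{H}_{\nu,\mu}^{-\theta}$ is immediate from \eqref{eq:1.3}, so the entire content is the seminorm estimate. First I would reproduce, via Lemma \ref{lemma:1.7} $(ii)$ and $(iii)$, the integral representation \eqref{eq:2.1} for $y^k{(y^{-1}D_y)}^q[e^{\frac{i}{2}y^2\cot\theta}y^{\mu-\nu}(\mathcal{H}_{\nu,\mu}^{-\theta}\psi)(y)]$; this step needs $\nu\geq -1/2$ so that the kernel is genuinely Bessel and the uniform bound $|x^{-\nu-q}J_{\nu+q+k}(x)|\leq C$ is available.

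Next I would estimate the integral exactly as in the proof of Theorem \ref{th2.4}: insert the Bessel bound and split $\int_0^\infty$ into $\int_0^1$ and $\int_1^\infty$, supplying a factor $x^{-2}$ on the tail (so that the convergent $\int_1^\infty x^{-2}\,dx$ controls it) at the cost of raising the power of $x$ from $1+k+2\nu+2q$ to $1+k+2\nu+2q+2$. On each piece the integrand carries $k$ copies of $(x^{-1}D_x)$ applied to $\psi$, so the membership $\psi\in\mathbb{H}^{2,\eta_q,B}(I)$ is invoked with derivative order $k$: the power of $x$ fixes the restraining constant, while the derivative count $k$ produces the factor ${(B+\sigma)}^k\eta_k$. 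Crucially the $\eta$-index equals $k$ on both integrals and coincides with the power index $k$ demanded by the target $\mathbb{H}_{1,\eta_k,B}(I)$; this is why, in contrast with the doubling $\eta_q\mapsto\eta_q^2$ in Theorems \ref{th4.6} and \ref{th4.8}, no submultiplicative use of Property \ref{property4.1} is required for the $\eta$-bookkeeping and both the constant $B$ and the sequence $\eta$ are preserved.

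Collecting the two pieces yields a bound of the form $D\big[C_{1+k+2\nu+2q}^{\nu,\mu}+C_{1+k+2\nu+2q+2}^{\nu,\mu}\big]{(B+\sigma)}^k\eta_k$. The last and decisive step is to absorb the two index-shifted restraining constants into a single constant depending on the derivative order $q$ alone. This is precisely the purpose of passing to the subspace $\hat{\mathbb{H}}^{2,\eta_q,B}(I)$: by its defining condition $\sup_{0\leq r\leq k}C_{k+r}^{\nu,\mu}=C_{k}^{'\nu,\mu}$, the supremum of the constants over the bounded block of indices produced by the shift $2\nu+2q+2$ is finite and may be replaced by one constant $C_{q}^{'\nu,\mu}$, giving $\leq C_{q}^{'\nu,\mu}{(B+\sigma)}^k\eta_k$, which is exactly the defining inequality of $\mathbb{H}_{1,\eta_k,B}(I)$ with $A=B$. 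The main obstacle is therefore not analytic but one of careful bookkeeping: keeping the power index, the derivative index and the $\eta$-index separate, and verifying that the shift in the restraining constants stays within the range controlled by the hat-space condition. I expect that a faithful transcription of the estimates of Theorem \ref{th2.4}, with $\eta_k$ substituted for $k^{k\beta}$ throughout, completes the argument.
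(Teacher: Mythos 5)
Your proposal is correct and takes essentially the same route as the paper: the paper's proof of Theorem \ref{th4.11} consists exactly of invoking the representation \eqref{eq:2.1} together with the argument of Theorem \ref{th2.4}, arriving at the bound $C\big[C_{1+2\nu+2q+k}^{\nu,\mu}+C_{1+2\nu+2q+k+2}^{\nu,\mu}\big]{(B+\sigma)}^{k}\eta_{k}\leq C_{q}^{*}{(B+\sigma)}^{k}\eta_{k}$, with the hat-space condition absorbing the index-shifted restraining constants, just as you describe. Your longer reconstruction (deriving \eqref{eq:2.1} from Lemma \ref{lemma:1.7}, splitting the integral at $1$ with the $x^{-2}$ tail factor, and noting that the $\eta$-index stays at $k$ so no submultiplicativity from Property \ref{property4.1} is needed) is precisely the content the paper compresses into its citation of Theorem \ref{th2.4}.
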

\begin{proof}
Exploiting \eqref{eq:2.1} and Theorem \ref{th2.4}, we have
\begin{eqnarray}
&&\vert y^k {(y^{-1}D_y)}^q [e^{\frac{i}{2}y^2\cot \theta} y^{\mu-\nu}(\mathcal{H}_{\nu, \mu}^{-\theta}\psi)(y)]\vert \nonumber\\
&\leq & C\Big[ C_{1+2\nu+2q+k}^{\nu, \mu}+ C_{1+2\nu+2q+k+2}\Big]{(B+\sigma)}^k \eta_k\nonumber\\
&\leq & C_q^{*} {(B+\sigma)}^k \eta_k\nonumber.
\end{eqnarray}
This completes the proof.
\end{proof}

\begin{remark}
If $\{\eta_q\}$ be the sequence satisfies the property \ref{property4.1} and  $\nu\geq -1/2$ the  fractional Hankel transform $\mathcal{H}_{\nu, \mu}^{\theta}$ is a continuous linear mapping from $\hat{\mathbb{H}}^{2, \eta_{q}, B}(I)$ into $\mathbb{H}_{1, \eta_k, B}(I)$. 
\end{remark}

\begin{theorem}
Let $\psi$ be the wavelet belongs to the space $\mathbb{W}_{\nu, \mu, \theta}(I)$. If  $\{\xi_k\}$ be the sequence satisfies the property \ref{property4.1} then fractional Hankel wavelet transform is a continuous linear mapping from $\mathbb{H}_{1, \xi_k, A}(I)$ into $\mathbb{H}_{1, \xi_k^2, \tilde{A}}(I\times I),$ where $\tilde{A}= \left(a, A^2H^6+ \frac{a^2\xi_0^2}{\xi_1^2} \right),$ for $\nu \geq -1/2.$ 
\end{theorem}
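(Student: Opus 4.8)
The plan is to transcribe, step for step, the argument used for the corresponding Gevrey-type theorem of Section 3 (the one producing $\tilde{\alpha}=(0,2\alpha)$), replacing the Gevrey inequality \eqref{eq:1.19} by Property \ref{property4.1} and the Fourier--Hankel mapping property of Remark \ref{remark2.2} by its ultradifferentiable analogue established in Theorem \ref{th4.8} and the remark following it. First I would start from the representation \eqref{eq:1.9} of $W_{\psi}^{\theta}f$ and substitute the identity \eqref{eq:2.1}, so that
\[
b^k(b^{-1}D_b)^q e^{\frac{i}{2}b^2\cot\theta}b^{\mu-\nu}(W_{\psi}^{\theta}f)(b,a)
\]
is expressed as a single $\omega$-integral carrying the Bessel kernel $(b\omega\csc\theta)^{-\nu-q}J_{\nu+q+k}(b\omega\csc\theta)$, the weight $\omega^{1+2\nu+2q+k}$, and the operator $(\omega^{-1}D_\omega)^k$ acting on the product of $\tilde f^{\theta}$ with the $\mathbb{W}_{\nu,\mu,\theta}$-factor. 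Bounding the kernel by $|x^{-\nu-q}J_{\nu+q+k}(x)|\le C$, expanding by the Leibnitz rule \eqref{eq:1.20}, and then applying $(a^{-1}D_a)^p$ reproduces the estimate \eqref{eq:3.3} verbatim.

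Next I would invoke Definition \ref{def:3.1} at $t=a\omega$, exactly as in \eqref{eq:3.4}, to factor the wavelet decay as $(1+a\omega)^{\rho_1-p-r}\le(1+a)^{\rho_1-p-r}(1+\omega)^{\rho_1-p-r}$ and to pull out a power $a^{2r}\omega^{2p}$. Choosing an integer $\nu_1\ge 1+2\nu$ and a decay exponent $s$ large enough that $\int_0^\infty(1+\omega)^{-s}\,d\omega<\infty$, this yields the same double sum \eqref{eq:3.5}, in which the surviving supremum is
\[
\sup_\omega\Big|\omega^{\nu_1+2q+2p+k+n}(\omega^{-1}D_\omega)^{k-r}\big[e^{-\frac{i}{2}\omega^2\cot\theta}\omega^{\mu-\nu}\tilde f^{\theta}(\omega)\big]\Big|.
\]

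The decisive step is to control this supremum through the ultradifferentiable image of $\tilde f^{\theta}=\mathcal H_{\nu,\mu}^{\theta}f$. Since $f\in\mathbb{H}_{1,\xi_k,A}(I)$, the remark following Theorem \ref{th4.8} gives $\tilde f^{\theta}\in\mathbb{H}^{2,\xi_q^2,A^2H^6}(I)$, whence the supremum is dominated by a constant (depending, after the usual maxima, on $p,q$) times $(A^2H^6+\delta_2)^{k-r}\xi_{k-r}^2$. Performing the sum over $r$ against the dilation factor $a^{2r}$ is precisely where the target inner scale is produced: applying the consequence \eqref{eq:4.1} of Property \ref{property4.1} in its squared form $\xi_{k-r}^2\le(\xi_0/\xi_1)^{2r}\xi_k^2$ and then the binomial theorem gives
\[
\sum_{r=0}^k\binom{k}{r}a^{2r}(A^2H^6+\delta_2)^{k-r}\xi_{k-r}^2\le\xi_k^2\Big(A^2H^6+\tfrac{a^2\xi_0^2}{\xi_1^2}+\delta_2'\Big)^k,
\]
which is exactly the $b$-scale $A_2=A^2H^6+\frac{a^2\xi_0^2}{\xi_1^2}$. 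The remaining factor $a^l$ supplies the $a$-scale $A_1=a$, while the polynomial-in-$a$ factor $(1+a)^{\rho_1-p}$, the finite $n$-sum, and the convergent $\omega$-integral are absorbed into $C_{p,q}^{\nu,\mu}$, giving the bound that defines $\mathbb{H}_{1,\xi_k^2,\tilde A}(I\times I)$.

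I expect the only genuine obstacle to be the bookkeeping in this last summation: one must use \eqref{eq:4.1} in the squared exponent and match it against the $a^{2r}$ inherited from \eqref{eq:3.4}, since any mismatch there alters the power of $\xi_0/\xi_1$ and hence the claimed scale $\tfrac{a^2\xi_0^2}{\xi_1^2}$. All preceding reductions are routine transcriptions of the Gevrey proof, so the substance of the argument rests on correctly feeding $\tilde f^\theta\in\mathbb{H}^{2,\xi_q^2,A^2H^6}(I)$ into the $r$-summation.
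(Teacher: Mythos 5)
Your proposal is correct and follows essentially the same route as the paper: both reduce to the Section~3 estimate \eqref{eq:3.5}, bound the supremum via the ultradifferentiable mapping property of Theorem \ref{th4.8} (and its remark for $\mathcal{H}_{\nu,\mu}^{\theta}$), apply \eqref{eq:4.1} in squared form $\xi_{k-r}^2\leq(\xi_0/\xi_1)^{2r}\xi_k^2$, and close the $r$-sum by the binomial theorem to produce the scale $A^2H^6+\frac{a^2\xi_0^2}{\xi_1^2}$. Your identification of the $r$-summation against $a^{2r}$ as the decisive bookkeeping step is exactly where the paper's proof also does its only real work.
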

\begin{proof}
From \eqref{eq:3.5} and in viewing Theorem \ref{th4.8}, we have
\begin{eqnarray}
&&\vert a^l b^k {(a^{-1}D_a)}^p{(b^{-1}D_b)}^q e^{\frac{i}{2}b^2 \cot \theta} b^{\mu-\nu} (W_{\psi}^{\theta}f)(b, a)\vert \nonumber\\
&\leq & C\sum_{r=0}^{k} \sum_{n=0}^{\rho_1-r-p+s}\binom{k}{r}\binom{\rho_1-r-p+s}{n} a^{2r+l} {(1+a)}^{\rho_1-r-p}\nonumber\\
&& \times \sup \left \vert \omega^{\nu_1+2q+2p+k+n}{(\omega^{-1}D_{\omega})}^{k-r}\Big[ e^{-\frac{i}{2}\omega^2 \cot \theta}\omega^{\mu-\nu}\tilde{f}^{\theta}(\omega)\Big] \right\vert \int_{0}^{\infty}\frac{1}{{(1+\omega)}^s}d\omega\nonumber\\
&\leq & C\sum_{r=0}^{k} \sum_{n=0}^{\rho_1-p+s}\binom{k}{r}\binom{\rho_1-p+s}{n} a^{2r+l} {(1+a)}^{\rho_1-p} {(A^2H^6+\delta_1)}^{k-r} \nonumber\\
&& \times \xi^2_{k-r} \max_{0\leq r\leq k} \vert \vert \tilde{f}^{\theta}\vert \vert_{k-r}^{\nu, \mu}\nonumber\\
&\leq & C\sum_{r=0}^{k} \sum_{n=0}^{\rho_1-p+s}\binom{k}{r}\binom{\rho_1-p+s}{n} a^{2r+l} {(1+a)}^{\rho_1-p} {(A^2H^6+\delta_1)}^{k-r}\nonumber\\
&& \times \xi^2_{k} {(\frac{\xi_0}{\xi_1})}^{2r} \max_{0\leq r\leq k} \vert \vert \tilde{f}^{\theta}\vert \vert_{k-r}^{\nu, \mu}\nonumber\\
&\leq & C \sum_{n=0}^{\rho_1-p+s} \binom{\rho_1-p+s}{n}a^l {(1+a)}^{\rho_1-p} \sum_{r=0}^{k} \binom{k}{r} {(A^2H^6+\delta_1)}^{k-r} {\left(a^2\frac{\xi_0^2}{\xi_1^2}\right)}^{r} \nonumber\\
&&\times \xi^2_{k}\max_{0\leq r\leq k} \vert \vert \tilde{f}^{\theta}\vert \vert_{k-r}^{\nu, \mu}\nonumber\\
&\leq & C^{*} a^l {\left(A^2H^6+ \frac{a^2 \xi^2_0}{\xi^2_1}+\delta_2\right)}^{k} \xi^2_{k}\max_{0\leq r\leq k} \vert \vert \tilde{f}^{\theta}\vert \vert_{k-r}^{\nu, \mu}\nonumber.
\end{eqnarray}
This completes the proof.
\end{proof}

\begin{theorem}
Suppose $\psi$ be the wavelet taken from $\mathbb{W}_{\nu, \mu, \theta}(I)$. If  $\{\eta_q\}$ be the sequence satisfies the property \ref{property4.1} then fractional Hankel wavelet transform is a continuous linear mapping from $\hat{\mathbb{H}}^{2, \eta_q, B}(I)$ into $\hat{\mathbb{H}}^{2, \eta_{2p+2q}, \tilde{B}}(I\times I),$ where $\tilde{B}= \left(B^2/a, B^2\right),$ for $\nu \geq -1/2.$ 
\end{theorem}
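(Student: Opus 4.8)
\noindent The plan is to reproduce the argument of the second theorem of Section~3 (the Gel'fand--Shilov wavelet theorem into $\hat{\mathbb H}^{2,\tilde\beta,\tilde B}(I\times I)$), replacing the factorial-type growth $q^{q\beta}$ and the inequality \eqref{eq:1.19} by the sequence $\{\eta_q\}$ together with Property~\ref{property4.1}. First I would start from the master estimate \eqref{eq:3.5}, which was derived for a general $f$ and expresses
\[
\left| a^l b^k {(a^{-1}D_a)}^p {(b^{-1}D_b)}^q e^{\frac{i}{2}b^2\cot\theta} b^{\mu-\nu}(W_{\psi}^{\theta}f)(b,a)\right|
\]
as a finite double sum over $r$ and $n$ whose only unbounded ingredient is the supremum $\sup_{\omega}\left|\omega^{\nu_1+2q+2p+k+n}{(\omega^{-1}D_\omega)}^{k-r}[e^{-\frac{i}{2}\omega^2\cot\theta}\omega^{\mu-\nu}\tilde f^\theta(\omega)]\right|$, where $\tilde f^\theta=\mathcal H_{\nu,\mu}^\theta f$. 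Since $f\in\hat{\mathbb H}^{2,\eta_q,B}(I)$, the remark following Theorem~\ref{th4.11} shows $\tilde f^\theta\in\mathbb H_{1,\eta_k,B}(I)$, so this supremum is bounded by $C_{k-r}^{\nu,\mu}(B+\sigma)^{\nu_1+2p+2q+k+n}\eta_{\nu_1+2p+2q+k+n}$.

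\noindent Next I would invoke Property~\ref{property4.1}\,[3] with the split index $l=2p+2q$ to obtain $\eta_{\nu_1+2p+2q+k+n}\le RH^{\nu_1+2p+2q+k+n}\eta_{2p+2q}\,\eta_{\nu_1+k+n}$, keeping the single factor $\eta_{2p+2q}$ intact as demanded by the target space. I would then sort the surviving powers by the variable they carry: the $p$-dependent factors $(B+\sigma)^{2p}$, $H^{2p}$ and the kernel factor $(1+a)^{\rho_1-r-p}$ (whose $-p$ part is estimated by $(1+a)^{-p}\le a^{-p}$) combine into $\bigl(\tfrac{B^2}{a}+\sigma_1'\bigr)^p$; the $q$-dependent factors $(B+\sigma)^{2q}$ and $H^{2q}$ combine into $(B^2+\sigma_2')^q$; and all pieces carrying only $\nu_1,k,n$, namely $R$, $H^{\nu_1+k+n}$, $(B+\sigma)^{\nu_1+k+n}$, $\eta_{\nu_1+k+n}$ and the constant $C_{k-r}^{\nu,\mu}$, together with the finite sums $\sum_{r=0}^{k}\binom{k}{r}$, $\sum_{n}\binom{\rho_1-p+s}{n}$ and the convergent integral $\int_0^\infty (1+\omega)^{-s}\,d\omega$, are absorbed into a constant $C_{l,k}^{\nu,\mu}$. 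This yields exactly the bound $C_{l,k}^{\nu,\mu}\,(B^2/a+\sigma_1')^p (B^2+\sigma_2')^q\,\eta_{2p+2q}$ defining $\mathbb H^{2,\eta_{2p+2q},\tilde B}(I\times I)$ with $\tilde B=(B^2/a,B^2)$.

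\noindent Finally, to land in the \emph{hat} space $\hat{\mathbb H}^{2,\eta_{2p+2q},\tilde B}(I\times I)$ rather than merely $\mathbb H^{2,\eta_{2p+2q},\tilde B}(I\times I)$, I would record that the constant $C_{l,k}^{\nu,\mu}$ arises through $\max_{0\le r\le k}$ of the $\mathbb H_{1,\eta_k,B}$-constants of $\tilde f^\theta$; the defining uniformity condition $\sup_{0\le r\le k}C_{k+r}=C_k'$ of $\hat{\mathbb H}^{2,\eta_q,B}(I)$, transported through the remark after Theorem~\ref{th4.11}, guarantees that these maxima are again dominated by a single constant, which is precisely the hat condition for the image. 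The main obstacle is the second paragraph: the delicate bookkeeping of Property~\ref{property4.1}\,[3] so that the index split leaves exactly $\eta_{2p+2q}$ (and not $\eta_{2p}\eta_{2q}$) while the base constants collapse to $B^2/a$ and $B^2$, with the spurious constants $R,H$ absorbed into the arbitrary perturbations $\sigma_1',\sigma_2'$; checking that $(1+a)^{-p}\le a^{-p}$ correctly produces the $1/a$ in the first component of $\tilde B$ is the one genuinely $a$-sensitive point.
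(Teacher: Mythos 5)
Your proposal follows the same route as the paper's own proof: both start from the master estimate \eqref{eq:3.5}, both use Theorem \ref{th4.11} (more precisely, the remark following it, since what is needed is that $\tilde f^{\theta}=\mathcal{H}_{\nu,\mu}^{\theta}f\in\mathbb{H}_{1,\eta_k,B}(I)$ for $f\in\hat{\mathbb{H}}^{2,\eta_q,B}(I)$) to bound the supremum by $C_{k-r}^{\nu,\mu}{(B+\sigma)}^{\nu_1+2p+2q+k+n}\eta_{\nu_1+2p+2q+k+n}$, and both then regroup the $p$- and $q$-dependent powers, via $(1+a)^{-p}\le a^{-p}$, into ${(B^2/a+\sigma_1)}^{p}{(B^2+\sigma_2)}^{q}$. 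The difference is that you carry the argument further than the paper does: the paper's final displayed bound still contains the factor $\eta_{\nu_1+2p+2q+k+n}$ --- it never invokes Property \ref{property4.1}[3] to extract $\eta_{2p+2q}$, and never checks the hat condition of the image --- so its last line does not literally match the defining inequality of $\hat{\mathbb{H}}^{2,\eta_{2p+2q},\tilde B}(I\times I)$, in which the constant may depend on $l,k$ but not on $p,q$. Your split $\eta_{\nu_1+2p+2q+k+n}\le RH^{\nu_1+2p+2q+k+n}\eta_{2p+2q}\,\eta_{\nu_1+k+n}$ is exactly the missing step, and your attention to the hat condition is likewise absent from the paper. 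The one weak point in your write-up is the claim that the resulting factors $H^{2p},H^{2q}$ can be absorbed into the arbitrary additive perturbations $\sigma_1',\sigma_2'$: since in general $H\ge 1$, a multiplicative factor growing like $H^{2p}$ cannot be dominated by ${(B^2/a+\sigma_1')}^{p}$ for arbitrarily small $\sigma_1'$, and an honest bookkeeping yields $\tilde B=\bigl(B^2H^2/a,\,B^2H^2\bigr)$ --- consistent with the $H$-powers that do appear in the constants of the paper's final theorem, but not with the constants stated here. This discrepancy is inherited from the theorem's statement rather than from your method; the paper's proof avoids confronting it only by stopping short of the reduction to $\eta_{2p+2q}$, so your argument is, if anything, the more complete of the two.
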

\begin{proof}
Proceeding as in the proof of the earlier theorem and exploiting Theorem \ref{th4.11}, we obtain
\begin{eqnarray}
&&\vert a^l b^k {(a^{-1}D_a)}^p{(b^{-1}D_b)}^q e^{\frac{i}{2}b^2 \cot \theta} b^{\mu-\nu} (W_{\psi}^{\theta}f)(b, a)\vert \nonumber\\
&\leq & C\sum_{r=0}^{k} \sum_{n=0}^{\rho_1-r-p+s}\binom{k}{r}\binom{\rho_1-r-p+s}{n} a^{2r+l} {(1+a)}^{\rho_1-r-p}\nonumber\\
&& \times \sup \left \vert \omega^{\nu_1+2q+2p+k+n}{(\omega^{-1}D_{\omega})}^{k-r}\Big[ e^{-\frac{i}{2}\omega^2 \cot \theta}\omega^{\mu-\nu}\tilde{f}^{\theta}(\omega)\Big] \right\vert \int_{0}^{\infty}\frac{1}{{(1+\omega)}^s}d\omega\nonumber\\
& \leq & C_1 \sum_{r=0}^{k} \sum_{n=0}^{\rho_1-p+s} \binom{k}{r} \binom{\rho_1-p+s}{n} a^{2r+l} {(1+a)}^{\rho_1-p} \max \vert \vert \tilde{f}^{\theta} \vert \vert C_{k-r}^{\nu, \mu}  \nonumber\\
&&\times {(B+\sigma)}^{\nu_1+2p+2q+k+n} \eta_{\nu_1+2p+2q+k+n}\nonumber\\
&\leq & C_2 \max \vert \vert \tilde{f}^{\theta} \vert \vert {\left(\frac{B^2}{a}+\sigma_1\right)}^{p} {(B^2+\sigma_2)}^{q} \eta_{\nu_1+2p+2q+k+n} \nonumber.
\end{eqnarray}
Hence the theorem proved.
\end{proof}

\begin{theorem}
Let $\psi \in \mathbb{W}_{\nu, \mu, \theta}(I)$. If  $\{\xi_k\}$ and $\{\eta_q\}$ be the sequences satisfies the property \ref{property4.1} then fractional Hankel wavelet transform is a continuous linear mapping from $\mathbb{H}_{\xi_k, A}^{ \eta_q, B}(I)$ into $\mathbb{H}_{{\xi^2_{2k}}, \eta_k, \tilde{A}}^{\xi_{2p+2q}, \eta_{2p+2q}, \tilde{B}}(I\times I),$ where $\tilde{A}=\Big(a, ABH^2(A^2H^6+a^2\xi_0^2/\xi_1^2)\Big)$ and $\tilde{B}= ( \frac{1}{a}A^2B^2H^6, A^2B^2H^4),$ for $\nu \geq -1/2.$ 
\end{theorem}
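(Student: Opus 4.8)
The plan is to run the ultradifferentiable counterpart of the argument used for the third wavelet theorem of Section 3, replacing the factorial inequality \eqref{eq:1.19} and Remark \ref{remark2.6} by Property \ref{property4.1} and the mapping result of Theorem \ref{th4.6}. First I would invoke the master estimate \eqref{eq:3.5}, which was derived from \eqref{eq:1.9}, Lemma \ref{lemma:1.7}, the Leibnitz rule \eqref{eq:1.20}, the wavelet bound in Definition \ref{def:3.1} and the uniform bound $|x^{-\nu-q}J_{\nu+q+k}(x)|\le C$; since none of these ingredients refers to the target space of $f$, the estimate \eqref{eq:3.5} is available verbatim here and only the way one bounds $\tilde f^\theta$ inside the supremum changes.

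The key substitution is to estimate $\tilde f^\theta=\mathcal H^\theta_{\nu,\mu}f$ by the remark following Theorem \ref{th4.6}: since $f\in\mathbb H^{\eta_q,B}_{\xi_k,A}(I)$ we have $\tilde f^\theta\in\mathbb H^{\xi_q^2,B_1}_{\xi_k\eta_k,A_1}(I)$ with $A_1=ABH^2$ and $B_1=A^2H^6$. Feeding this bound into the supremum factor of \eqref{eq:3.5} with multiplication index $\nu_1+2q+2p+k+n$ and differentiation order $k-r$ produces $(A_1+\delta)^{\nu_1+2q+2p+k+n}\,\xi_{\nu_1+2q+2p+k+n}\,\eta_{\nu_1+2q+2p+k+n}\,(B_1+\sigma)^{k-r}\,\xi^2_{k-r}$. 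I would then separate the compound index on the two sequence factors $\xi$ and $\eta$ by item [3] of Property \ref{property4.1}, peeling off the block $\xi_{2p+2q}\eta_{2p+2q}$ that becomes the superscript pair of the target, and reducing the residual shifted terms $\xi_{k+\nu_1+n},\eta_{k+\nu_1+n}$ to $\xi_k,\eta_k$ by iterating item [4] of Property \ref{property4.1}; the factor $\xi^2_{k-r}$ is treated through \eqref{eq:4.1}, namely $\xi^2_{k-r}\le(\xi_0/\xi_1)^{2r}\xi_k^2$, and the surviving $\xi$-powers at index $\sim k$ are recombined into $\xi^2_{2k}$ using item [2] of Property \ref{property4.1} (via $\xi_k^2\le\xi_0\xi_{2k}$) together with the eventual monotonicity implied by item [5]. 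Every application of [3] and [4] only emits geometric constants $R^{\,\cdot}H^{\,\cdot}$, which split as $H^{2p}=(H^2)^p$, $H^{2q}=(H^2)^q$ and $H^k$ and are absorbed into the respective $p$-, $q$- and $k$-constants, while the frozen pieces $H^{\nu_1+n}$, $\xi_{\nu_1+n}$, $\eta_{\nu_1+n}$ are swallowed by the finite $n$-sum $0\le n\le\rho_1-p+s$ and the convergent integral $\int_0^\infty(1+\omega)^{-s}\,d\omega$ with $s>1$.

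Finally I would collect the $a$-powers and run the $r$-summation. The binomial identity $\sum_{r=0}^k\binom kr\big(a^2\xi_0^2/\xi_1^2\big)^r(A^2H^6)^{k-r}=\big(A^2H^6+a^2\xi_0^2/\xi_1^2\big)^k$ fuses the factors $a^{2r}$, $(\xi_0/\xi_1)^{2r}$ and $(B_1+\sigma)^{k-r}$, and multiplying by the $k$-part $(ABH^2)^k$ of $(A_1+\delta)^{\nu_1+2q+2p+k+n}$ yields the second component of $\tilde A$; the remaining $a^l$ gives the first component $a$, while the $2p$- and $2q$-parts $(A_1+\delta)^{2p}=(A^2B^2H^4)^p$ and $(A_1+\delta)^{2q}=(A^2B^2H^4)^q$, together with the $H$-factors from the index splitting and the regrouping of $(1+a)^{\rho_1-p}$ with the residual power of $a$, produce $\tilde B=\big(\tfrac1a A^2B^2H^6,\,A^2B^2H^4\big)$. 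The main obstacle is precisely this bookkeeping: the single compound index $\nu_1+2q+2p+k+n$ sits simultaneously on the $\xi$- and $\eta$-factors of $\tilde f^\theta$, so it must be split consistently to route exactly $2p+2q$ into the superscript sequences and the order-$k$ remainder into $\xi^2_{2k}\eta_k$, and one has to check that the many $R,H$ constants and the $a$-dependent factors assemble into the stated $\tilde A$ and $\tilde B$ rather than into merely some admissible constants.
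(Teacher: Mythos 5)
Your proposal is correct and follows essentially the same route as the paper's own proof: both start from the master estimate \eqref{eq:3.5}, bound $\tilde f^\theta$ via Theorem \ref{th4.6} (more precisely its remark for the forward transform, a point you state more carefully than the paper does), use \eqref{eq:4.1} to reduce $\xi^2_{k-r}$, collapse the $r$-sum by the binomial identity to produce $\big(A^2H^6+a^2\xi_0^2/\xi_1^2\big)^k$, and split the compound index $\nu_1+2p+2q+k+n$ with the inequalities of Property \ref{property4.1} to assemble $\xi^2_{2k}\eta_k\,\xi_{2p+2q}\eta_{2p+2q}$ and the stated $\tilde A$, $\tilde B$. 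The bookkeeping you flag as the main obstacle is handled in the paper in exactly the same (and in fact terser) way, so no new idea is missing.
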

\begin{proof}
Using Theorem \ref{th4.6} and in viewing \eqref{eq:3.5}, we see that
\begin{eqnarray}
&&\vert a^l b^k {(a^{-1}D_a)}^p{(b^{-1}D_b)}^q e^{\frac{i}{2}b^2 \cot \theta} b^{\mu-\nu} (W_{\psi}^{\theta}f)(b, a)\vert \nonumber\\
&\leq & C\sum_{r=0}^{k} \sum_{n=0}^{\rho_1-p+s}\binom{k}{r}\binom{\rho_1-p+s}{n} a^{2r+l} {(1+a)}^{\rho_1-p}\vert \vert \tilde{f}^{\theta}\vert \vert^{\nu, \mu} {(A^2H^6+\delta_2)}^{k-r} \nonumber\\
&& \times  \xi_{k-r}^2 {(ABH^2 +\delta_1)}^ {\nu_1+2p+2q+k+n} \xi_{\nu_1+2p+2q+k+n}\eta_{\nu_1+2p+2q+k+n}\nonumber\\
&\leq &C^{'} \sum_{n=0}^{\rho_1-p+s} \binom{\rho_1-p+s}{n} a^l {(1+a)}^{\rho_1-p} \sum_{r=0}^{k} \binom{k}{r} {\Big(\frac{a^2\xi_0^2}{\xi_1^2}\Big)}^r  {(A^2H^6+\delta_2)}^{k-r}\nonumber\\
&& \times \vert \vert \tilde{f}^{\theta}\vert \vert^{\nu, \mu} \xi_{k}^2 {(ABH^2 +\delta_1)}^ {\nu_1+2p+2q+k+n} \xi_{\nu_1+2p+2q+k+n}\eta_{\nu_1+2p+2q+k+n}\nonumber\\
&\leq & C_1 a^l {\Big(ABH^2(A^2H^6+a^2\xi_0^2/\xi_1^2)+\delta_3\Big)}^k {(\frac{1}{a}A^2B^2H^6+\delta_4)}^p {(A^2B^2H^4+\delta_5)}^q\nonumber\\
&&\times\vert \vert \tilde{f}^{\theta}\vert \vert^{\nu, \mu}  \xi_{k}^2 \xi_{\nu_1+2p+2q+k+n}\eta_{\nu_1+2p+2q+k+n}\nonumber.
\end{eqnarray}
Now using the inequalities [2] and [3] from the property \ref{property4.1}, the last expression can be rewritten as
\begin{eqnarray}
&\leq&C_1 a^l {\Big(ABH^2(A^2H^6+a^2\xi_0^2/\xi_1^2)+\delta_3\Big)}^k {\left(\frac{1}{a}A^2B^2H^6+\delta_4\right)}^p {(A^2B^2H^4+\delta_5)}^q\nonumber\\
&&\times \vert \vert \tilde{f}^{\theta}\vert \vert^{\nu, \mu}\xi_{2k}^2 \eta_k \xi_{2p+2q}\eta_{2p+2q}\nonumber.
\end{eqnarray}
This completes the proof.
\end{proof}


\bibliographystyle{amsplain}

\begin{thebibliography}{25}
\bibitem {Chui} Chui CK. An Introduction to Wavelets. New York: Academic Press; 1992.

\bibitem{Deb} Debnath L. Wavelet Transform and Their Applications. Boston (MA): Birkh$\ddot{a}$user; 2002. 

\bibitem{duran} Duran AJ. Gelfand-Shilov spaces for the Hankel transform. Indag. Math. 1992; 3(2): 137-151.

\bibitem{fried} Friedman A. Generalized functions and partial differential equations. Englewood Cliffs, NJ: Prentice Hall, 1963.

\bibitem{gelfand} Gel'fand IM, Shilov GE. Generalized functions, Vol 2, Academic Press, New York, 1968.

\bibitem{hamio} Hamio DT. Integral equations associated with Hankel convolution.  Trans Amer Math Soc.  1965; 116:330-375.

\bibitem{hirs} Hirschman II. Variation diminishing Hankel transform. J Anal Math. 1960-1961; 8: 307-361.
 
\bibitem{hols} Holschneider M. Wavelets: an analysis tool. Clarendon, Oxford, 1995.

\bibitem{kerr}  Kerr FH. Fractional powers of Hankel transforms in the Zemanian space. J Math  Anal  Appl. 1992; 166:65-83.


\bibitem{koma} Komatsu H. Ultradistributions, I, structure theorems and a characterization. J. Fac. Sci. Univ. Tokyo Sec. IA Math. 1973; 20(1): 25-105.

\bibitem{lee} Lee WYK. On the spaces of type $H_{\mu}$ and their Hankel transformations. SIAM J. Math. Anal. 1974; 5(2):336-348.

\bibitem{mahato1} Mahato, K., On the boundedness result of  wavelet transform associated with fractional Hankel transform, Integral Transforms  Spec. Funct. 2017; 28(11): 789-800.


\bibitem{marrero} Marrero I. Spaces of Generalized type $\mathcal{H}_{\mu}$, Spaces of type S and the Hankel transformation. Rocky Mt. J. Math. 2005; 35(5): 1707-1722.

\bibitem{matsu} Matsuzawa T. Hypoellipticity in ultradistribution spaces. J. Fac. Sci. Univ. Tokyo Sec. IA Math. 1987; 34: 779-790.


\bibitem{prasad1} Prasad  A, Mahato A, Singh VK, Dixit MM. The continuous fractional Bessel wavelet transformation.  Bound Value Probl. 2013; 2013:40. (16 pp.).



\bibitem{prasad2} Prasad, A., Mahato, K., Two versions of fractional powers of Hankel-type transformations and pseudo-differential operators, Rend. Circ. Mat. Palermo. \textbf{65}(2), 209-241 (2016).


\bibitem{prasad4} Prasad, A., Mahato, K., The fractional Hankel wavelet transformation, Asian-Eur. J. Math. \textbf{8}(2), 1550030, (11 pages) (2015).



\bibitem{pathak0} Pathak  RS. The Wavelet Transform.  Vol-6. Paris, Amsterdam: Atlanis Press/World Scientific; 2009.

\bibitem{rs} Pathak RS. Integral Transforms of Generalized Function and Their Applications. Gordon Breach Science Publishers, Amsterdem; 1997.

\bibitem{pathak} Pathak RS, Dixit MM.  Continuous and discrete Besse wavelet transforms. J Compt  Appl  Math. 2003;  160:211-250.


\bibitem{pandey} Pathak RS, Pandey AB. On Hankel transforms of ultradistributions. Appl. Anal. 1985; 20: 245-268.

%
\bibitem{rodino} Rodino L. Linear Partial Differential Operators in Gevrey spaces. World Scientific, Singapore; 1993.

\bibitem {torre} Torre  A. Hankel-type integral transforms and their fractionalization: a note.  Integral Transforms Spec Funct. 2008; 19(4): 277-292.


\bibitem{upadhyay} Upadhyay SK, Yadav RN, Debnath L. On continuous Bessel wavelet transformation associated with the Hankel-Hausdorff operator. Integral Transforms Spec  Funct. 2012; 23(5):315-323.





\bibitem {ze} Zemanian AH.Generalized Integral Transformations. New York: Interscience Publishers; 1968.
\end{thebibliography}

\end{document}